\documentclass[a4paper,fleqn]{cas-sc}

\usepackage[authoryear,longnamesfirst]{natbib}

\usepackage{amssymb,latexsym,amsfonts,amsthm,a4wide}              
\usepackage{mathrsfs}
\usepackage{mathtools} 
\usepackage{bm}%
\usepackage{float}
\usepackage{cases} 
\usepackage{textcomp} %
\usepackage{hyperref}
\usepackage{booktabs}%
\usepackage{color}
\usepackage{graphicx} 
\usepackage{subcaption} 

\newtheorem{theorem}{Theorem}[section]
\newtheorem{lemma}{Lemma}[section]

\numberwithin{equation}{section}

\newcommand{\normmm}[1]{{\left\vert\kern-0.25ex\left\vert\kern-0.25ex\left\vert #1   
   \right\vert\kern-0.25ex\right\vert\kern-0.25ex\right\vert}}                       

\def\tsc#1{\csdef{#1}{\textsc{\lowercase{#1}}\xspace}}
\tsc{WGM}
\tsc{QE}
\tsc{EP}
\tsc{PMS}
\tsc{BEC}
\tsc{DE}

\makeatletter
\@ifundefined{printorcid}{}{%
  \renewcommand{\printorcid}[1][]{}%
}
\makeatother

\begin{document}
\let\WriteBookmarks\relax
\def\floatpagepagefraction{1}
\def\textpagefraction{.001}
\shorttitle{Parallelizable Iterative Decoupled Algorithms for TP}
\shortauthors{Li Z.,Li J.}

\title [mode = title]{Parameters Robust Parallelizable Iterative Decoupled Algorithms for Thermo-Poroelasticity with Nonlinear Convective Transport}                      

%

\author[1]{Ziliang Li}[style=chinese]
\cormark[1]
\ead{li_zl@foxmail.com}

\affiliation[1]{organization={The College of Mathematical Sciences, Harbin Engineering University},
                postcode={150000}, 
                state={Harbin},
                country={China}}

\author[2]{Jingzhi Li}[style=chinese]
\ead{li.jz@sustech.edu.cn}

\affiliation[2]{organization={Department of Mathematics, Southern University of Science and Technology},
                postcode={518000}, 
                state={Shenzhen}, 
                country={China}}

\cortext[cor1]{Corresponding author}


\begin{abstract}
This paper presents several parallel iterative algorithms for the nonlinear thermo-poroelasticity. We introduce an auxiliary variable to transform the original model into a four-field formulation to overcome Poisson locking.  
The iterative idea is extended to linearize and decouple the four-field model. Different coupling strategies are employed among these three subproblems, combined with appropriately designed iteration settings, yielding distinct iterative parallel algorithms. Stabilization terms are incorporated into these algorithms to ensure parameter robustness in extreme situations. 
We derive error estimates and prove the convergence of these parallel iterative decoupled algorithms. Finally, numerical experiments are presented to compare the performance of the different parallel algorithms in extreme parameter settings, thereby validating their efficiency and robustness.
\end{abstract}



\begin{keywords} 
thermo-poroelasticity \sep nonlinear convective transport  \sep parallel \sep iterative decoupling \sep mixed finite element
\end{keywords}

\maketitle

\section{Introduction}
In this study, we investigate numerical methods for the nonlinear thermo-poroelasticity model. This model describes the coupled interaction between non-isothermal fluid flow and the deformation of porous materials. It is a combination of Biot's equation \cite{biot1941general, terzaghi1943theoretical, biot1955theory} and the energy conservation equation. It specifically focuses on the interactions within poroelasticity, covering mechanical effects, fluid flow, and heat transfer in porous media.

Let $\Omega \subset \mathbb{R}^d$, with $d$ being either 2 or 3, denote an open and bounded region that has a Lipschitz polyhedral boundary $\partial \Omega$. Also, define $J=(0,\tau)$ as the time interval, where $\tau>0$ is the final time. The combined space-time region is then $\Omega\times J$.
We employ the variables  \(\bm u\), \(p\), and \(T\) to represent the displacement, fluid pressure, and temperature distribution, respectively, relative to a reference value. Define $\bm\varepsilon(\bm u)=\tfrac{1}{2}(\nabla\bm u+\nabla\bm u^T)$, and use $\bm I$ to denote the identity tensor. We apply the principles of momentum conservation, mass conservation, and energy conservation to derive three equations.
 Then, the general nonlinear thermo-poroelasticity model \cite{brun2018upscaling, brun2019well, brun2020monolithic} is presented in the following form: find $(\bm{u}, p, T)$ such that
\begin{equation}\label{eq: general TP_Model}
\begin{aligned} 
       -\nabla\cdot(2\mu\bm\varepsilon(\bm u)+\lambda\nabla\cdot\bm u\bm I)+\alpha\nabla p+\beta\nabla T
                      &=\bm f,\quad\text{in }\Omega\times J ,\\
\tfrac{\partial}{\partial t}(c_0 p - b_0 T+\alpha\nabla\cdot\bm u)-\nabla\cdot(\bm K\nabla p)
                      &=g,\quad\text{in }\Omega\times J ,\\
\tfrac{\partial}{\partial t}(a_0T - b_0p+\beta\nabla\cdot\bm u) 
-c_f(\bm K\nabla p)\cdot\nabla T
-\nabla\cdot(\bm\Theta\nabla T)
                      &=H_s,\quad\text{in }\Omega\times J,\\
\end{aligned}
\end{equation}
where the terms on the right-hand side, $\bm f$, $g$, and $H_s$, denote the body force, mass source, and heat source, respectively. 
For a comprehensive derivation of the model, readers can consult \cite{brun2018upscaling}.

The parameters $\mu$ and $\lambda$ are the Lamé parameters, and they can be written in terms of Young's modulus $E$ and Poisson's ratio $\nu$ as shown:
\(
\lambda=\tfrac{E\nu}{(1 + \nu)(1-2\nu)},\quad\mu=\tfrac{E}{2(1+\nu)}.
\)
The parameters $\bm K=(K_{ij})^d_{i,j = 1}$ and $\bm\Theta=(\Theta_{ij})^d_{i,j = 1}$ are matrices that are determined by the permeability of the medium and the viscosity of the fluid.
$a_0$, $b_0$, $c_0$, $\alpha$, and $\beta$ represent thermal capacity, thermal dilatation coefficient, specific storage coefficient, Biot--Willis constant, and thermal stress coefficient, respectively. $c_f$ is the fluid volumetric heat capacity divided by the reference temperature.
Detailed explanations of parameters, including their physical significance and corresponding units, as well as more comprehensive discussions, can be found in \cite{antonietti2023discontinuous,bonetti2024robust}.

To guarantee the existence and uniqueness of the solution, appropriate boundary and initial conditions must be specified. 
In this work, we consider mixed boundary conditions for $\bm{u}$, where the boundary $\partial \Omega$ is decomposed into the Dirichlet boundary $\Gamma_d$ and  Neumann boundary $\Gamma_n$ (i.e., $\partial\Omega=\Gamma_d\cup \Gamma_n$, $\Gamma_d\cap \Gamma_n=\varnothing$), with $\Gamma_d \neq \partial \Omega$. For pressure $p$ and temperature $T$, Dirichlet boundary conditions are prescribed. Without loss of generality, we assume all boundary conditions are homogeneous. Specifically,
\begin{equation}\label{eq: B_C}
\begin{aligned}
    \bm{u} = \bm 0, \quad& \text{on } \Gamma_d,\\
     ( 2\mu \bm{\varepsilon}(\bm{u}) + \lambda \nabla \cdot \bm{u} \bm{I} - \alpha  p\bm I - \beta  T \bm I)\bm n = \bm0 , \quad& \text{on } \Gamma_n   ,    \\
    p=T=0, \quad& \text{on } \partial\Omega.
\end{aligned}
\end{equation}
The initial conditions are presented as:
\begin{equation}\label{eq: I_C}
\bm{u}(\cdot, 0)=\bm{u}^0, \quad p(\cdot, 0)=p^0, \quad T(\cdot, 0)=T^0.
\end{equation}

As outlined in \cite{brun2020monolithic}, we make the following \textbf{Assumptions} for the model parameters throughout this paper (similar assumptions are also discussed in \cite{chen2022multiphysics,zhang2024coupling}):
 
(A1) $\bm{K}$ and $\bm\Theta$ are constant in time, symmetric, and positive definite. There exist $k_m>0$, $k_M>0$ and $\theta_m>0$, $\theta_M>0$ such that
$$ k_m|\bm v|^2 \le \bm v^T\bm K(x)\bm v,\quad |\bm K(x)\bm v|\le k_M|\bm v|,\quad\theta_m|\bm v|^2 \le \bm v^T\bm\Theta(x)\bm v,\quad |\bm\Theta(x)\bm v|\le\theta_M|\bm v|,\quad \forall \bm v\in\mathbb R^d . $$

(A2) The constants \(\lambda\), \(\mu\), \(\alpha\) and \(\beta\) are strictly positive constants.

(A3) The constants $a_0,b_0 $ and $c_0$ are such that $a_0,~ c_0 > b_0 \ge 0$ .

(A4) Impose the following regularity  :
\[ 
\bm{u} \in L^{\infty}(J;\bm{H}_{0}^{k+1}(\Omega)), \quad 
\partial_t \bm{u} \in L^{2}(J;\bm{H}_{0}^{k+1}(\Omega)), \quad 
\partial_{tt} \bm{u} \in L^{2}(J;\bm{H}_{0}^{1}(\Omega)),
\]
\[
\xi \in L^{\infty}(J;H^{s+1}(\Omega)), \quad 
\partial_t \xi \in L^{2}(J;H^{s+1}(\Omega)), \quad 
\partial_{tt} \xi \in L^{2}(J;L^{2}(\Omega)),
\]
\[
p \in L^{\infty}(J;H_{0}^{l+1}(\Omega)), \quad 
\partial_t p \in L^{2}(J;H_{0}^{l+1}(\Omega)), \quad 
\partial_{tt} p \in L^{2}(J;L^{2}(\Omega)),
\]
\[
T \in L^{\infty}(J;H_{0}^{l+1}(\Omega)), \quad 
\partial_t T \in L^{2}(J;H_{0}^{l+1}(\Omega)), \quad 
\partial_{tt} T \in L^{2}(J;L^{2}(\Omega)),
\]
where $k$, $s$, and $l$ are positive integers.

In addition to \textbf{Assumptions} (A1)-(A4), throughout the remainder of this paper, we assume that the initial conditions satisfy \(\bm{u}^0 \in [H^1(\Omega)]^d\) and \(p^0, T^0 \in L^2(\Omega)\), while the source terms satisfy \(\bm{f} \in [L^2(\Omega)]^d\), \(g \in L^2(\Omega)\), and \( H_{s} \in L^2(\Omega)\).

The well-posedness of the thermo-poroelastic model has been established in studies such as \cite{brun2019well}. For the numerical solution of these models, the most direct approach involves coupled algorithms, which solve the fully coupled system of governing equations simultaneously, offering the benefit of unconditional stability. For instance, the standard Galerkin method for the thermo-poroelasticity model problem is proposed in \cite{zhang2022galerkin}, where the corresponding error analysis is also derived. Other works, such as \cite{zhang2024coupling} and \cite{Zhang2023mfe}, have combined mixed finite element, Galerkin element, and characteristic finite element methods, respectively. Furthermore, robust discontinuous Galerkin methods for fully coupled nonlinear models are detailed in \cite{bonetti2024robust, antonietti2023discontinuous}. 
\cite{cai2026parameter} proposes two types of preconditioners that are robust with respect to variations in parameters and mesh refinement for linear thermo-poroelastic models to improve computational efficiency. 
For the decoupling methods, \cite{chen2022multiphysics} proposes a multiphysics finite element method for the quasi-static thermo-poroelasticity model by reformulating the original problem. A further option involves iterative algorithms, in which the solution for each variable is updated sequentially using solutions from previous iterations. While iterative algorithms have been widely applied to poroelastic models, their application to the thermo-poroelastic model remains relatively limited. In this context, \cite{brun2020monolithic} extends the original three-field thermo-poroelastic model to a five-field formulation to study the problem under different coupling scenarios.  An efficient iterative decoupling method based on a four-field formulation was presented in \cite{cai2025efficient}, and a decoupled iterative solution approach combined with reduced-order modeling was developed to enhance computational efficiency, as in \cite{Ballarin2024projection}.

Numerically approximating the nonlinear thermo-poroelasticity system poses notable computational challenges, partly due to the wide range of spatially and temporally varying model parameters encountered in practical engineering applications. The main contribution of this work is the development of several robust parallel iterative algorithms tailored for nonlinear thermo-poroelasticity problems, which exhibit consistent parameter robustness across the full spectrum of realistic parameter variations.
To address the challenge of Lamé parameters approaching infinity, we introduce an auxiliary variable that transforms the original three-field model into a four-field formulation.
In Section \ref{sec: coupled algorithm}, we investigate two ill-conditioned scenarios from an algebraic perspective, which arise when adopting either the three-field model or the four-field model. Accordingly, dedicated stabilization terms are incorporated into the proposed algorithms to effectively suppress the numerical instability of pressure and temperature under extreme parameter conditions.
For parallelization, the system is split into a mixed linear elasticity (M) subproblem, a flow subproblem (F), and a heat subproblem (H).
Different coupling strategies of these three subproblems are employed to yield distinct iterative algorithms.
With appropriately designed iteration settings, the decoupling algorithms(M-FH, MF-H, MH-F, and M-F-H) can be executed in parallel.
We approximate the nonlinear convection term by fully replacing it at the current iteration step with its value from the previous iteration step. Other linearization methods, such as a standard fixed-point linearization strategy or Newton-type approaches, can also be used, but these are beyond the scope of this paper.
An error estimate is derived for the non-iterative discretized nonlinear model. The convergence of all iterative algorithms to the solution of this discretized model is also established.
Consequently, the parallel iterative algorithms maintain optimal convergence rates, providing a reliable and accurate computational framework for thermo-poroelastic processes.
It is noteworthy that all iterative decoupled algorithms converge to the nonlinear coupled solution independently of physical parameters.

The paper is structured as follows.
Section \ref{sec: PDE model} introduces a generalized nonlinear thermo-poroelastic model along with its four-field formulation.
Section \ref{sec: coupled algorithm} presents an iterative coupling algorithm and several parallel iterative decoupling algorithms.
Section \ref{sec: the error estimate and convergence}  presents an error analysis of the nonlinear coupled algorithm and conducts a convergence analysis of the iterative decoupling algorithms.   
Section \ref{sec: experiments} offers numerical experiments to validate the theoretical findings.

\section{Poisson Locking-Free Four-Field Formulation}\label{sec: PDE model}

As in Biot's model, the displacement $\bm u$ will get into a divergence-free state in thermo-poroelasticity as the Lamé constant $\lambda\to\infty$ \cite{boffi2013mixed,yi2017study}, which is called Poisson locking.  
To eliminate Poisson locking, we now introduce the four-field formulation for the above model \eqref{eq: general TP_Model}. More clearly, following the approach presented in literature in \cite{lee2017parameter,oyarzua2016locking,cai2025efficient}, we define an auxiliary variable:  
$$  
\xi = -\lambda \nabla \cdot \bm{u} + \alpha p + \beta T,
$$  
which is commonly referred to as the pseudo-total pressure \cite{antonietti2023discontinuous}. Substituting this variable into equation \eqref{eq: general TP_Model}, the four-field thermo-poroelasticity problem is as follows.

\begin{equation}\label{TP_Model_four}
\begin{aligned} 
      -2\mu \nabla\cdot \bm\varepsilon(\bm u) +\nabla\xi 
                      =\bm f &~~~(\bm M) , \\
         -\lambda \nabla\cdot\bm u-\xi+\alpha p+\beta T
                      =0 &, \\
 -\tfrac\alpha\lambda\tfrac\partial{\partial t}\xi+(c_0+\tfrac{\alpha^2}\lambda)\tfrac\partial{\partial t}p
                 +(\tfrac{\alpha\beta}\lambda-b_0)\tfrac\partial{\partial t}T
                      -\nabla\cdot(\bm K\nabla p)=g &~~~(\bm F), \\
 -\tfrac\beta\lambda\tfrac\partial{\partial t}\xi +(\tfrac{\alpha\beta}\lambda-b_0)\tfrac\partial{\partial t}p
+(a_0+\tfrac{\beta^2}\lambda)\tfrac\partial{\partial t}T
              -c_f(\bm K\nabla p)\cdot\nabla T    -\nabla\cdot(\bm \Theta\nabla T)
                      = H_{s} &~~~(\bm H). 
\end{aligned}
\end{equation}
We still use the initial condition (\ref{eq: I_C}) and the Dirichlet boundary condition for \eqref{TP_Model_four}. In practical situations, nonhomogeneous Dirichlet and Neumann boundary conditions are commonly encountered. The analysis performed for homogeneous boundary conditions can be straightforwardly extended to accommodate these cases.

Define the following spaces:
$$
\bm V=[ H^1_{0,\Gamma_d}(\Omega) ]^d,~Q=L^2(\Omega),~W=H_0^1(\Omega). 
$$

In the remaining part of this paper, for convenience, we introduce the following parameter transformations:  
\[
c_\alpha = c_0 + \tfrac{\alpha^2}{\lambda}, ~ c_{\alpha\beta} = - b_0+\tfrac{\alpha \beta}{\lambda} , ~ c_\beta = a_0 + \tfrac{\beta^2}{\lambda}.
\]  

Applying parameter transformations, the variational formulation for the four-field linear thermo-poroelasticity problem reads as: for almost every $t\in (0, \tau]$, find $(\bm u,~\xi,~p,~T)\in\bm V\times Q\times W\times W$ such that
\begin{equation}\label{TP_Model_var after parameters transformation}
\begin{aligned} 
2\mu(\bm\varepsilon(\bm u), \bm\varepsilon(\bm v)) - (\nabla\cdot\bm v,\xi)&=(\bm f,\bm v)       ,~\forall \bm v\in\bm V,  \\
  (\nabla\cdot\bm u,\phi)+\tfrac{1}{\lambda}(\xi,\phi)-\tfrac{\alpha}{\lambda}(p,\phi)-\tfrac{\beta}{\lambda}(T,\phi)&=0 ,~\forall\phi\in Q,\\
 -\tfrac\alpha\lambda( \xi_t ,q)+c_\alpha( p_t ,q) + c_{\alpha\beta}( T_t ,q)
  +(\bm K\nabla p,\nabla q)&=(g,q),~\forall q\in W,\\
 -\tfrac\beta\lambda( \xi_t ,S)+c_{\alpha\beta}( p_t ,S)+c_\beta( T_t ,S)   
 -c_f(\bm K\nabla p\cdot\nabla T,S)&\\
  +(\bm\Theta\nabla T,\nabla S)&=( H_{s},S), ~\forall S\in W.\\
\end{aligned}
\end{equation}  
 For this problem, the initial condition for $\xi$ is
\begin{equation}\label{eq: initial xi0}
 \xi(\cdot,0) = \xi^0 = -\lambda \nabla \cdot \bm{u}^0 + \alpha p^0 + \beta T^0. 
\end{equation}
Moreover, the initial conditions for other variables, \eqref{eq: I_C}, \eqref{eq: initial xi0}, and the Dirichlet boundary condition are also applied to problem \eqref{TP_Model_var after parameters transformation}.
 
Let $h$ denote the maximum diameter of the mesh elements, and for $h > 0$, let $\mathcal{T}_h$ be a family of triangulations of the domain $\Omega$ consisting of triangular elements. The triangulations are assumed to be shape-regular and quasi-uniform. The discrete approximation spaces for $\bm{u}$, $\xi$, $p$ and $T$ are denoted by $\bm{V}_h \times Q_h\times W_h\times W_h$.

For time discretization, we consider an equidistant partition consisting of a series of points from $0$ to $\tau$ with a constant step size of $\Delta t=\tau/N$, where $N$ is a positive integer. Specifically, we define $\bm u^n$ as $\bm u(t_n)$, where $t_n$ represents the $n$-th point in the partition. Specifically, we define \(\bm{u}^n = \bm{u}(t_n)\), where \(t_n\) is the \(n\)-th time level. Similarly, \(\xi^n = \xi(t_n)\), \(p^n = p(t_n)\), \(T^n = T(t_n)\), \(\bm{f}^n = \bm{f}(t_n)\), \(g^n = g(t_n)\), and \(H_s^n = H_s(t_n)\).
The four-field formulation \eqref{TP_Model_var after parameters transformation}, when discretized in time using the backward Euler method, produces the following fully coupled algorithm.

Suppose that an initial value $(\bm u_h^0,~\xi_h^0,~p_h^0,~T_h^0) =(\bm u^0,~\xi^0,~p^0,~T^0)$ is provided.
For the given finite element spaces $\bm V_h \subset \bm V$, $Q_h\subset Q$, $W_h\subset W $, the coupled algorithm reads as, find $(\bm u_h^n,~\xi_h^n,~p_h^n,~T_h^n)\in\bm V_h\times Q_h\times W_h\times W_h$
such that for all $(\bm v_h ,~\phi_h ,~q_h ,~S_h )\in\bm V_h\times Q_h\times W_h\times W_h$,
\begin{subequations}\label{eq:coupled alg}
\begin{align}   \label{TP_Model_dis_a}
2\mu( \bm\varepsilon(\bm u^n_h),\bm\varepsilon(\bm v_h)) - (\nabla\cdot\bm v_h,\xi^n_h)=&(\bm f^n,\bm v_h),  
\\    \label{TP_Model_dis_b}
  (\nabla\cdot\bm u^n_h,\phi_h)+\tfrac{1}{\lambda}(\xi^n_h,\phi_h)-\tfrac{\alpha}{\lambda}(p^n_h,\phi_h)-\tfrac{\beta}{\lambda}(T^n_h,\phi_h)=&0 ,
\\ \label{TP_Model_dis_c}
 -  \tfrac\alpha\lambda(\xi_h^n,q_h)+ c_\alpha(p_h^n,~q_h)
 + c_{\alpha\beta}(T_h^n,~q_h) 
                +   \Delta t( \bm K\nabla  p^n_h,~\nabla q_h)=&(\tilde g,q_h), \\  
     \begin{split}\label{TP_Model_dis_d}
  -\tfrac\beta\lambda(\xi_h^n,S_h)+c_\beta(T_h^n,S_h)
  +c_{\alpha\beta}(p_h^n,~S_h)   
           +\Delta t(\bm\Theta\nabla T^n_h,~\nabla S_h) &  \\
-\Delta tc_f( \bm{K}\nabla p_h^{n} \cdot \nabla T_h^{n}, S_h)    = &    
       (\tilde H_s,S_h)   , 
	\end{split}
\end{align}
\end{subequations}
where 
\begin{equation} \label{eq:gn qh}
(\tilde{g}, q_h) = c_\alpha(p_h^{n-1}, q_h) + c_{\alpha\beta}(T_h^{n-1}, q_h) - \tfrac{\alpha}{\lambda}(\xi_h^{n-1}, q_h) + \Delta t(g^n, q_h),
\end{equation}
\begin{equation} \label{eq:Hsn Sh}
(\tilde{H}_s, S_h) = c_\beta(T_h^{n-1}, S_h) + c_{\alpha\beta}(p_h^{n-1}, S_h) - \tfrac{\beta}{\lambda}(\xi_h^{n-1}, S_h) + \Delta t(H_s^n, S_h).
\end{equation}

‌We present the following theorem to demonstrate that the proposed discrete nonlinear algorithm \eqref{eq:coupled alg} is free of Poisson locking as $\lambda\to\infty$. The proof of the theorem is presented in Section \ref{sec: the error estimate and convergence}.
\begin{theorem} \label{th: error estimate}   
 Assume that (A1)-(A4) hold true and $\bm V_h,Q_h$ satisfy the discrete inf-sup condition. 
Let \((\bm u,~\xi,~p,~T)\) and \((\bm u_h^{n},~\xi_h^{n},~p_h^{n},~T_h^{n})\) for \(n\ge0\) be the solutions of equations\eqref{TP_Model_var after parameters transformation} and equations\eqref{TP_Model_dis_a}-\eqref{TP_Model_dis_d}, respectively. There holds    
\begin{equation} \label{eq: non error estimate}     
\begin{aligned}     
& 
2\mu\Vert\bm\varepsilon(\bm u-\bm u_h^N)\Vert_{L^2(\Omega)}^2         
+\Vert \xi-\xi_h^N\Vert_{L^2(\Omega)}^2       
+(c_0-b_0) \Vert p-p_h^N\Vert_{L^2(\Omega)}^2         
+(a_0-b_0)\Vert T-T_h^N\Vert_{L^2(\Omega)}^2                       
\\
&+ \tfrac12 \Delta t \sum_{n=0}^{N-1} \big(              
 k_m \|\nabla p-\nabla p_h^n \|^2_{L^2(\Omega)}
    + \theta_m \|\nabla T-\nabla T_h^n \|^2_{L^2(\Omega)} \big)\\    
\le &  \tilde C_1\Delta t^2 + \tilde C_2 h^{2k} + \tilde C_3 h^{2s+2}
                            + \tilde C_4 h^{2l+2},      
\end{aligned}    
\end{equation}      
where     
\begin{equation*}      
\begin{aligned}     
&  \tilde C_1   =\tilde C\Big(  \int_{0}^{\tau}\| \bm u_{tt}  \|_{H^1(\Omega)}^2                   
+\int_{0}^{\tau}\| \xi_{tt}  \|_{L^2(\Omega)}^2                   
+\int_{0}^{\tau}\| p_{tt}  \|_{L^2(\Omega)}^2                 
  +\int_{0}^{\tau}\| T_{tt}  \|_{L^2(\Omega)}^2 
   \Big),\\     
&  \tilde C_2   =\tilde C           
       \int_{0}^{\tau}\| \bm u_t  \|_{H^{k+1}(\Omega)}^2 , \\      
&  \tilde C_3   =\tilde C                
            \int_{0}^{\tau}\| \xi_t  \|_{H^{s+1}(\Omega)}^2  ,   \\
&  \tilde C_4   =\tilde C \left(               
\int_{0}^{\tau}\| p_t  \|_{H^{l+1}(\Omega)}^2                
+  \int_{0}^{\tau}\| T_t  \|_{H^{l+1}(\Omega)}^2\right), 
\end{aligned}    
\end{equation*} 
where  \(\tilde C=\tilde C(1/\lambda,\alpha,\beta,a_0,b_0,c_0,c_f,k_m,k_M,\theta_m,\theta_M)\).   
\end{theorem}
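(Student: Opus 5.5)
We follow the standard projection-splitting route. We introduce the elliptic (Stokes-type) projection $(\Pi_h\bm u,\Pi_h\xi)\in\bm V_h\times Q_h$. It is defined by $2\mu(\bm\varepsilon(\bm u-\Pi_h\bm u),\bm\varepsilon(\bm v_h))-(\nabla\cdot\bm v_h,\xi-\Pi_h\xi)=0$ and $(\nabla\cdot(\bm u-\Pi_h\bm u),\phi_h)=0$. By the discrete inf-sup condition it is well defined, with $\|\bm u-\Pi_h\bm u\|_{H^1}+\|\xi-\Pi_h\xi\|\le C(h^{k}\|\bm u\|_{H^{k+1}}+h^{s+1}\|\xi\|_{H^{s+1}})$. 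For $p$ and $T$ we use the Ritz projections $R_h^K$, $R_h^\Theta$ associated with $(\bm K\nabla\cdot,\nabla\cdot)$ and $(\bm\Theta\nabla\cdot,\nabla\cdot)$, which give $L^2$ errors of order $h^{l+1}$. Every error is split as $e=\rho+\theta_h$, where $\rho$ is the projection error, bounded directly, and $\theta_h$ is the discrete part, e.g.\ $\theta_u^n=\Pi_h\bm u^n-\bm u_h^n$ and likewise $\theta_\xi^n,\theta_p^n,\theta_T^n$.

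We subtract \eqref{TP_Model_dis_a}--\eqref{TP_Model_dis_d} from the variational problem \eqref{TP_Model_var after parameters transformation} at $t=t_n$, after writing $\partial_t$ as a backward difference plus a truncation remainder $R^n$ that is controlled by $\int_{t_{n-1}}^{t_n}\|\cdot_{tt}\|$. This gives error equations in the $\theta$'s whose right-hand sides contain $R^n$, the differences $\bar\partial_t\rho$ (controlled by the $\partial_t$ regularity, which yields $\tilde C_2,\tilde C_3,\tilde C_4$), and the convective nonlinearity. We test with $\bm v_h=\bar\partial_t\theta_u^n$ (from the difference in time of the first equation), $\phi_h=\theta_\xi^n$ (after differencing the second equation in time), $q_h=\theta_p^n$ and $S_h=\theta_T^n$. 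We then add the results so that the $(\nabla\cdot,\xi)$ couplings cancel. The capacity terms combine into the quadratic form
\[
\tfrac1\lambda\|\xi-\alpha p-\beta T\|^2+c_0\|p\|^2-2b_0(p,T)+a_0\|T\|^2
\]
applied to $\bar\partial_t$ of the $\theta$'s tested against the $\theta$'s. By (A3) this form is bounded below by $(c_0-b_0)\|\theta_p\|^2+(a_0-b_0)\|\theta_T\|^2$, using $2b_0|(p,T)|\le b_0(\|p\|^2+\|T\|^2)$. The identity $2(a-b,a)=\|a\|^2-\|b\|^2+\|a-b\|^2$ then produces telescoping sums. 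The $\lambda$-independent control of $\|\theta_\xi^n\|$ comes from the inf-sup condition applied to the first error equation: $\|\theta_\xi^n\|\le C(\mu\|\bm\varepsilon(\theta_u^n)\|+\text{consistency})$. Hence $\tilde C$ depends only on $1/\lambda$, which stays bounded as $\lambda\to\infty$; this is the locking-free claim.

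The main obstacle is the nonlinear term $c_f(\bm K\nabla p\cdot\nabla T,S)-c_f(\bm K\nabla p_h^n\cdot\nabla T_h^n,S)$. We split it as
\[
(\bm K\nabla e_p\cdot\nabla T,\theta_T)+(\bm K\nabla p_h^n\cdot\nabla e_T,\theta_T).
\]
For the first piece we assume $\nabla T\in L^\infty$, which follows from (A4) with $l\ge1$ in the needed dimensions. We then bound it by $k_M\|\nabla T\|_{L^\infty}\|\nabla e_p\|\|\theta_T\|$ and absorb $\tfrac14k_m\|\nabla\theta_p\|^2$ via Young's inequality. The second piece needs $\|\nabla p_h^n\|_{L^\infty}$ bounded uniformly. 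We would obtain this by induction on $n$: write $\nabla p_h^n=\nabla R_hp^n-\nabla\theta_p^n$, and use an inverse inequality together with the inductive error bound at level $n-1$. This requires a mild coupling condition such as $\Delta t^2h^{-d}$ and $h^{2l+2-d}$ small. The remaining part is handled by $\|\nabla\theta_T\|\|\theta_T\|$ and absorbed into $\tfrac14\theta_m\|\nabla\theta_T\|^2$. This absorption explains the factor $\tfrac12$ in front of the gradient sums in \eqref{eq: non error estimate}. If the $L^\infty$ bound proves delicate, the fallback is to assume the discrete solution is bounded in $W^{1,\infty}$, as is commonly done.

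Finally, we sum over $n$ and apply the discrete Gronwall lemma to the resulting inequality. This bounds the $\theta$-norms by $\tilde C_1\Delta t^2+\tilde C_2h^{2k}+\tilde C_3h^{2s+2}+\tilde C_4h^{2l+2}$. The initial $\theta$'s vanish or are of projection order, since $\bm u_h^0=\bm u^0$ etc. The triangle inequality with the projection estimates then completes the bound \eqref{eq: non error estimate}.
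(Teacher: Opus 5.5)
Your route is essentially the paper's. Both use a mechanics projection plus Ritz projections and backward-Euler error equations. Both test the undifferenced momentum error equation with the increment of $\theta_u$ and the time-differenced constraint with $\theta_\xi$. If you difference the momentum equation, as your parenthetical suggests, the $(\nabla\cdot,\xi)$ couplings no longer cancel. Both then test with $\theta_p,\theta_T$, telescope the symmetric capacity form via polarization, use the same split of the convective term, and finish with Grönwall and the triangle inequality.

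The differences are minor:
\begin{itemize}
\item Your projection omits the $\tfrac1\lambda(\xi,\phi_h)$ term built into the paper's $\Pi^{Q_h}$; this is harmless.
\item You control $\|\theta_\xi\|$ by inf-sup, which puts $\mu$ and the inf-sup constant into $\tilde C$. The paper instead adds the two mechanics error equations tested with $(e^h_{\bm u},e^h_\xi)$ and gets $\|e^h_\xi\|\le\alpha\|e_p\|+\beta\|e_T\|$.
\item Your small-weight Young's inequality on the convective term is an improvement. The paper moves $c_fC_u^2k_M\Delta t\sum\|\nabla e_p^{h}\|^2$ (and the $T$ analogue) to the right-hand side and then drops it. That is legitimate only if $c_fC_u^2k_M$ is small relative to $k_m$ and $\theta_m$.
\end{itemize}

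\textbf{Gap 1: projection errors in gradient norms.} The paper has the same gap.
\begin{itemize}
\item In the convective term $\nabla e_p=\nabla\rho_p+\nabla\theta_p$, and you absorb only the $\nabla\theta_p$ part. The remainders $k_M\|\nabla T\|_{L^\infty}\|\nabla\rho_p\|\|\theta_T\|$ and $k_M\|\nabla p_h^n\|_{L^\infty}\|\nabla\rho_T\|\|\theta_T\|$ feed $\Delta t\sum_n\|\nabla\rho^n\|^2=O(h^{2l})$ into the Grönwall inequality, not $O(h^{2l+2})$. The paper sidesteps this only by writing $\nabla e_p^{h,n+1}$ where $\nabla(p^{n+1}-p_h^{n+1})$ belongs.
\item For the discrete parts this is repairable. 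Integrate by parts using $\rho_p\in H^1_0(\Omega)$: $(\bm K\nabla\rho_p\cdot\nabla T,\theta_T)=-(\rho_p,\bm K\nabla T\cdot\nabla\theta_T)-(\rho_p,\theta_T\,\nabla\cdot(\bm K\nabla T))$. This costs the extra hypothesis $\nabla\cdot(\bm K\nabla T)\in L^\infty$. Treat $\rho_T$ the same way after writing $\nabla p_h^n=\nabla p^n-\nabla e_p^n$.
\item Your final triangle-inequality step cannot be repaired. The gradient sums in the statement contain the full errors, whose projection part $\|\nabla(p-R_hp)\|^2$ is of exact order $h^{2l}$. For $k=2$, $s=l=1$ (the experimental setting, where the tables show first-order $H^1$ rates for $p,T$), a right-hand side of order $\Delta t^2+h^4$ cannot bound these terms.
\item What your argument, and the paper's, actually proves is the estimate with $\nabla\theta_p,\nabla\theta_T$ in the sums, or with an extra $h^{2l}$ term on the right.
\end{itemize}

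\textbf{Gap 2: the bootstrap for $\|\nabla p_h^n\|_{L^\infty}$ does not close as described.}
\begin{itemize}
\item The convective term is implicit, so the step-$n$ estimate already uses $\|\nabla p_h^n\|_{L^\infty}$. The step-$(n-1)$ bound does not control $\theta_p^n$ without a cut-off or fixed-point argument.
\item Your energy controls $\|\theta_p^n\|_{L^2}$ pointwise in time, but $\nabla\theta_p$ only in $\ell^2(L^2)$. The inverse inequality therefore costs $h^{-1-d/2}$ and leaves a factor $h^{l-d/2}$, which is $O(1)$ for $l=1$, $d=2$. The conditions you list presuppose a pointwise-in-time $H^1$ bound you have not derived.
\item $H^{2}\not\hookrightarrow W^{1,\infty}$ for $d=2,3$, so $\nabla T\in L^\infty$ does not follow from (A4) when $l=1$.
\item Hence your fallback is not optional. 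You need exactly the paper's extra assumption $\|\nabla T^n\|_{L^\infty},\|\nabla p_h^n\|_{L^\infty}<C_u$, and $\tilde C$ then depends on $C_u$.
\end{itemize}
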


\section{Parallel Iterative Algorithms with Stabilization Terms}\label{sec: coupled algorithm}
Numerical instability arises when the parameters take extreme values.
Define $\bm\sigma(\bm u)=2\mu\bm\varepsilon\bm u+\lambda tr(\bm\varepsilon\bm u)\bm I$.
The operator form of the system \eqref{eq: general TP_Model} can be expressed as
\begin{equation*} 
\small
\begin{bmatrix}
\bm\sigma     &\alpha I             &  \beta I                                \\   
\alpha\tfrac{\partial}{\partial t}\nabla\cdot& 
c_0\tfrac{\partial}{\partial t}I-\nabla\cdot\bm K\nabla    
                                           &-b_0\tfrac{\partial}{\partial t}I\\
\beta\tfrac{\partial}{\partial t}\nabla\cdot&
-b_0\tfrac{\partial}{\partial t}I    
                                           & 
a_0\tfrac{\partial}{\partial t}I-\nabla\cdot\bm\Theta\nabla-c_f(\bm K\nabla p)\cdot\nabla\\
\end{bmatrix}
\begin{bmatrix}
 \bm u \\ p \\ T
\end{bmatrix}
=
\begin{bmatrix}
 \bm f \\   g \\ H_s
\end{bmatrix}.
\end{equation*}

When the material parameters $a_0$, $b_0$, and $c_0$ take small or zero values, while the sufficiently small permeability tensor $\bm{K}$ and thermal conductivity tensor $\bm{\Theta}$ render the diffusive and coupling terms $c_f(\bm{K}\nabla p)\cdot\nabla T$, $\nabla\cdot(\bm{K}\nabla p)$, and $\nabla\cdot(\bm{\Theta}\nabla T)$ asymptotically negligible, the lower-right $2\times2$ block is approximately a zero matrix relative to the remaining parameters, which makes the coefficient matrix ill-posed. On the other hand, when $c_f$ is sufficiently large compared to other parameters, the nonlinear convection term becomes dominant, and $c_f(\bm{K}\nabla p)\cdot\nabla T$ introduces a nonsymmetric perturbation that significantly increases the condition number of the coefficient matrix, further leading to severe ill-posedness.

Upon switching to the four-field formulation, the $2\times2$ lower-right sub-block of the system remains unaltered. As a direct consequence, the ill-posedness persists across the four-field model.

To circumvent this ill-posedness, one regularization strategy is to introduce a diagonal stabilization term, which restores the invertibility of the algebraic system. At the same time, these stabilization terms also endow the algorithms with guaranteed convergence.

Next, we present several iterative algorithms equipped with stabilization terms $\rho_p(p^{n, i}_h-p^{n,i-1}_h,  q_h)$ and $\rho_T(T^{n, i}_h-T^{n,i-1}_h,  S_h)$, which integrate distinct decoupling strategies to suppress the numerical instability in pressure and temperature fields.
An additional $\rho_\xi(\xi^{n, i}_h-\xi^{n,i-1}_h,  \phi_h)$ term is also needed to ensure convergence for the MF-H and the MH-F algorithms.

\subsection{The parallel iterative algorithm}\label{sub: The parallel iteratively decoupled algorithm}
In the next parts, let $0\le \rho_p$, $\rho_T$, $\rho_\xi\in\mathbb R$.
Let \(i\) be the iteration index, and we propose the iterative algorithm for solving \((\bm{u}_h^n, \xi_h^n, p_h^n, T_h^n)\) :
For each time step \(t_n\) (\(n \geq 1\)), let the previous iteration step states \((p_h^{n, i-1}, T_h^{n, i-1}, \xi_h^{n, i-1})\) be given, with the initial iteration values set as \(p_h^{n,0} = p_h^{n-1}\), \(T_h^{n,0} = T_h^{n-1}\), and \(\xi_h^{n,0} = \xi_h^{n-1}\).   
We first present the following iteratively fully coupled algorithm.
\subsubsection{MFH algorithm:}

 Given $p_h^{n,{i-1}}$, $T_h^{n,{i-1}}$, $\xi_h^{n,i-1}$, find $(\bm u^{n,i}_h,~\xi^{n,i}_h,~p_h^{n,i},~T_h^{n,i})$ such that for all $(\bm v_h ,~\phi_h ,~q_h ,~S_h )\in\bm V_h\times Q_h\times W_h\times W_h$,
\begin{equation}\label{alg: MFH} 
  \begin{aligned}  
    2\mu(\bm\varepsilon(\bm u^{n,i}_h), \bm\varepsilon(\bm v_h))
  - (\nabla\cdot\bm v_h,\xi^{n,i}_h) 
                &=(\bm f^n,\bm v_h),  \\
   (\nabla\cdot\bm u^{n,i}_h,\phi_h)+\tfrac{1}{\lambda}(\xi^{n,i}_h,\phi_h)   
  -\tfrac{\alpha}{\lambda}(p^{n,{i}}_h,\phi_h)-\tfrac{\beta}{\lambda}(T^{n,{i}}_h,\phi_h)
                &=  0,  \\ 
      -  \tfrac\alpha\lambda(\xi_{h}^{n,i},q_h)+  c_\alpha(p_{h}^{n,i},q_h) 
               + c_{\alpha\beta}(T_{h}^{n,i},q_h) 
                + \Delta t(\bm K\nabla p^{n,i}_h,\nabla q_h)&\\
                    + \rho_p(p^{n,i}_h-p^{n,i-1}_h,  q_h) &  =   (\tilde{g}, q_h)       ,\\
     -\tfrac\beta\lambda(\xi_h^{n,i},S_h)+c_\beta(T_h^{n,i},S_h)
              +c_{\alpha\beta}(p_h^{n,i},S_h) 
             +\Delta t(\bm\Theta\nabla T^{n,i}_h,\nabla S_h)&\\ 
          +  \rho_T(T^{n,i}_h-T^{n,i-1}_h,  S_h)    =  (\tilde{H}_s, S_h)  
            +\Delta t(c_f\bm{K}\nabla p_h^{n,i-1}\cdot& \nabla T_h^{n,i-1}, S_h),\\
\end{aligned}
\end{equation}
where $(\tilde{g}, q_h)$ and $(\tilde{H}_s, S_h)$ here as well as in the following text are defined in \eqref{eq:gn qh}-\eqref{eq:Hsn Sh}. In the algorithm, the whole nonlinear term is replaced by the value of the last iteration step to ensure symmetry and simplicity.  Other linearization methods, such as a standard fixed-point linearization strategy or Newton-type approaches \cite{huang2016newton}, can also be used.

Instead of solving the system \eqref{TP_Model_dis_a}-\eqref{TP_Model_dis_d} in a fully coupled way, we can reformulate it into two or three decoupled subproblems. We apply different splitting strategies to decouple the governing equations, yielding distinct iterative algorithms as presented below.  
 
\subsubsection{Parallel M-FH algorithm}
 Given $p_h^{n,{i-1}},T_h^{n,{i-1}}$,find $(\bm u_h^{n,i},\xi_h^{n,i})$ such that for all $(\bm v_h ,~\phi_h)\in\bm V_h\times Q_h$,
\begin{equation}\label{alg: M FH uxi} 
\begin{aligned}  
    2\mu(\bm\varepsilon(\bm u^{n,i}_h), \bm\varepsilon(\bm v_h))
  - (\nabla\cdot\bm v_h,\xi^{n,i}_h) 
                &=(\bm f^n,\bm v_h),  \\
   (\nabla\cdot\bm u^{n,i}_h,\phi_h)+\tfrac{1}{\lambda}(\xi^{n,i}_h,\phi_h)
                &=  \tfrac{\alpha}{\lambda}(p^{n,{i-1}}_h,\phi_h)+\tfrac{\beta}{\lambda}(T^{n,{i-1}}_h,\phi_h). \\ 
\end{aligned}
\end{equation}
 Given $p_h^{n,{i-1}},T_h^{n,{i-1}},\xi_h^{n,i-1}$, find $(p_h^{n,i},T_h^{n,i})$ such that for all $(q_h ,~S_h )\in W_h\times W_h$,
\begin{equation}\label{alg: M FH pT} 
  \begin{aligned}  
       c_\alpha(p_{h}^{n,i},q_h) 
            &   + c_{\alpha\beta}(T_{h}^{n,i},q_h) 
                + \Delta t(\bm K\nabla p^{n,i}_h,\nabla q_h)
                + \rho_p(p^{n,i}_h-p^{n,i-1}_h,  q_h)
                  \\
                =&    (\tilde g,q_h)     
                +  \tfrac\alpha\lambda(\xi_{h}^{n,i-1},q_h)  ,\\
      c_\beta(T_h^{n,i},S_h)
            & +c_{\alpha\beta}(p_h^{n,i},S_h) 
              +\Delta t(\bm\Theta\nabla T^{n,i}_h,\nabla S_h)
              + \rho_T(T^{n,i}_h-T^{n,i-1}_h,  S_h)\\ 
               =&  ( \tilde H_{s} ,S_h)  +\tfrac\beta\lambda(\xi_h^{n,i-1},S_h)
                +\Delta t(c_f\bm{K}\nabla p_h^{n,i-1}\cdot \nabla T_h^{n,i-1}, S_h).\\
\end{aligned}  
\end{equation}

\subsubsection{Parallel MF-H algorithm:} 
 Given $p_h^{n,{i-1}}$, $T_h^{n,{i-1}}$, find $(\bm u^{n,i}_h,~\xi^{n,i}_h,~p_h^{n,i})$ such that for all $(\bm v_h ,~\phi_h ,~q_h)\in\bm V_h\times Q_h\times W_h$,
\begin{equation}\label{alg: MF_H 1} 
  \begin{aligned}  
    2\mu(\bm\varepsilon(\bm u^{n,i}_h), \bm\varepsilon(\bm v_h))
  - (\nabla\cdot\bm v_h,\xi^{n,i}_h) 
                &=(\bm f^n,\bm v_h),  \\
   (\nabla\cdot\bm u^{n,i}_h,\phi_h)+\tfrac{1}{\lambda}(\xi^{n,i}_h,\phi_h)   
  -\tfrac{\alpha}{\lambda}(p^{n,{i}}_h,\phi_h)
  +\rho_\xi(\xi^{n, i}_h-\xi^{n,i-1}_h,  \phi_h)
                &=  \tfrac{\beta}{\lambda}(T^{n,i-1}_h,\phi_h),  \\ 
      -  \tfrac\alpha\lambda(\xi_{h}^{n,i},q_h)+  c_\alpha(p_{h}^{n,i},q_h) 
                + \Delta t(\bm K\nabla p^{n,i}_h,\nabla q_h) + \rho_p(p^{n,i}_h-p^{n,i-1}_h,  q_h)&  = \\
- c_{\alpha\beta}(T_{h}^{n,i-1},q_h) &+  (\tilde{g}, q_h)       .\\
\end{aligned}
\end{equation}
 Given $p_h^{n,{i-1}}$, $T_h^{n,{i-1}}$, $\xi_h^{n,i-1}$, find $T_h^{n,i}$ such that for all $S_h \in W_h$,
\begin{equation}\label{alg: MF_H 2} 
  \begin{aligned}  
  c_\beta(T_h^{n,i},S_h)
             +\Delta t(\bm\Theta\nabla T^{n,i}_h,\nabla S_h) 
          + \rho_T(T^{n,i}_h-T^{n,i-1}_h,  S_h) = \tfrac\beta\lambda(\xi_h^{n,i-1},S_h)
& \\
  -c_{\alpha\beta}(p_h^{n,i-1},S_h) + (\tilde{H}_s, S_h)   +\Delta t(c_f\bm{K}\nabla p_h^{n,i-1}\cdot \nabla T_h^{n,i-1}, S_h)&.\\
\end{aligned}
\end{equation}

\subsubsection{Parallel MH-F algorithm:}  
 Given $p_h^{n,{i-1}}$, $T_h^{n,{i-1}}$, find $(\bm u^{n,i}_h,~\xi^{n,i}_h,~T_h^{n,i})$ such that for all $(\bm v_h ,~\phi_h,~S_h )\in\bm V_h\times Q_h\times W_h$,
\begin{equation}\label{alg: F_HM 1} 
  \begin{aligned}  
    2\mu(\bm\varepsilon(\bm u^{n,i}_h), \bm\varepsilon(\bm v_h))
  - (\nabla\cdot\bm v_h,\xi^{n,i}_h) 
                &=(\bm f^n,\bm v_h),  \\
  (\nabla\cdot\bm u^{n,i}_h,\phi_h)+\tfrac{1}{\lambda}(\xi^{n,i}_h,\phi_h)   
  -\tfrac{\beta}{\lambda}(T^{n,{i}}_h,\phi_h)
  +\rho_\xi(\xi^{n, i}_h-\xi^{n,i-1}_h,  \phi_h)
                &=  \tfrac{\alpha}{\lambda}(p^{n,i-1}_h,\phi_h),  \\ 
     -\tfrac\beta\lambda(\xi_h^{n,i},S_h)+c_\beta(T_h^{n,i},S_h)
             +\Delta t(\bm\Theta\nabla T^{n,i}_h,\nabla S_h) + \rho_T(T^{n,i}_h-T^{n,i-1}_h,  S_h)
               &= \\
-c_{\alpha\beta}(p_h^{n,i-1},S_h)  + (\tilde{H}_s, S_h)  
            +\Delta t(c_f\bm{K}\nabla p_h^{n,i-1}\cdot& \nabla T_h^{n,i-1}, S_h).\\
\end{aligned}
\end{equation}
 Given $T_h^{n,{i-1}}$, $\xi_h^{n,i-1}$, find $p_h^{n,i}$ such that for all $S_h \in W_h$,
\begin{equation}\label{alg: F_HM 2} 
  \begin{aligned}   
      c_\alpha(p_{h}^{n,i},q_h)      
  + \Delta t(\bm K\nabla p^{n,i}_h,\nabla q_h) + \rho_p(p^{n,i}_h-p^{n,i-1}_h,  q_h)&  = \\
  \tfrac\alpha\lambda(\xi_{h}^{n,i-1},q_h) - c_{\alpha\beta}(T_{h}^{n,i-1},q_h) & +  (\tilde{g}, q_h).\\
\end{aligned}
\end{equation}

\subsubsection{Parallel M-F-H algorithm:}  
 Given $p_h^{n,{i-1}}$, $T_h^{n,{i-1}}$, find $(\bm u^{n,i}_h,~\xi^{n,i}_h)$ such that for all $(\bm v_h ,~\phi_h)\in\bm V_h\times Q_h$,
\begin{equation}\label{alg: H_F_M 1} 
  \begin{aligned}  
    2\mu(\bm\varepsilon(\bm u^{n,i}_h), \bm\varepsilon(\bm v_h))- (\nabla\cdot\bm v_h,\xi^{n,i}_h) 
                &= (\bm f^n,\bm v_h),  \\
 (\nabla\cdot\bm u^{n,i}_h,\phi_h)+\tfrac{1}{\lambda}(\xi^{n,i}_h,\phi_h)   
                &= \tfrac{\alpha}{\lambda}(p^{n,i-1}_h,\phi_h)+\tfrac{\beta}{\lambda}(T^{n,i-1}_h,\phi_h).  \\ 
\end{aligned}
\end{equation}
 Given $T_h^{n,{i-1}}$, $\xi_h^{n,i-1}$, find $p_h^{n,i}$ such that for all $q_h\in W_h$,
\begin{equation}\label{alg: H_F_M 2} 
  \begin{aligned}  
        c_\alpha(p_{h}^{n,i},q_h)     
      + \Delta t(\bm K\nabla p^{n,i}_h,\nabla q_h)+ \rho_p(p^{n,i}_h-p^{n,i-1}_h,  q_h)&  =\\
 \tfrac\alpha\lambda(\xi_{h}^{n,i-1},q_h)- c_{\alpha\beta}(T_{h}^{n,i-1},q_h)&+  (\tilde{g}, q_h)       .\\
\end{aligned}
\end{equation}
 Given $p_h^{n,{i-1}}$, $T_h^{n,{i-1}}$, $\xi_h^{n,i-1}$, find $ T_h^{n,i} $ such that for all $ S_h \in W_h$,
\begin{equation}\label{alg: H_F_M 3} 
  \begin{aligned}  
     c_\beta(T_h^{n,i},S_h) 
             +\Delta t(\bm\Theta\nabla T^{n,i}_h,\nabla S_h) + \rho_T(T^{n,i}_h-T^{n,i-1}_h,  S_h)
               = \tfrac\beta\lambda(\xi_h^{n,i-1},S_h) & \\
           -c_{\alpha\beta}(p_h^{n,i-1},S_h)
            +  (\tilde{H}_s, S_h) 
            +\Delta t(c_f\bm{K}\nabla p_h^{n,i-1}\cdot  \nabla T_h^{n,i-1}, S_h)&.\\
\end{aligned}
\end{equation}

The algorithms M-FH, MF-H, MH-F, and M-F-H proceed as follows:
1. Compute all subproblems in parallel at each iteration step.
2. Repeat the iteration process until the sequence \((\bm{u}_h^{n,i}, \xi_h^{n,i}, p_h^{n,i}, T_h^{n,i})\) converges to within a prescribed tolerance. 
 
Next, we present a convergence theorem for the proposed iterative algorithms. The proof of the theorem is presented in Section \ref{sec: the error estimate and convergence}.
\begin{theorem}\label{th: convergence of all alg} 
Assuming the \textbf{Assumptions} (A1)-(A4) hold and the time step satisfy
$$ \Delta t<\min\{
          \frac{( c_0-b_0 )h^2}{   c_fk_MC_u^2C_{inv}^2 - k_mC_p^2h^2 },
          ~~
          \frac{ (a_0-b_0 )h^2 }{ c_f  k_MC_u^2C_{inv}^2
                                  +\tfrac12 c_f h^2
                                - \theta_mC_p^2h^2}    \},$$
where $C_u$, $C_{inv}$, and $C_p$ are defined in Section \ref{sec: the error estimate and convergence}. Then, the solutions generated by the MFH and the M-FH algorithms both converge to the solutions of equations \eqref{eq:coupled alg}.
The solutions generated by the MF-H algorithm converge to the solutions of equations \eqref{eq:coupled alg}, provided the stabilization parameters $\rho_p\ge |c_{\alpha\beta}|+\tfrac{\alpha}{\lambda}$, $\rho_T\ge|c_{\alpha\beta}|$, and $\rho_\xi\ge\tfrac{\alpha}{\lambda}$. The solutions generated by the MH-F algorithm converge to the solutions of equations \eqref{eq:coupled alg}, provided the stabilization parameters $\rho_p\ge |c_{\alpha\beta}|$, $\rho_T\ge|c_{\alpha\beta}|+\tfrac{\alpha}{\lambda}$, and $\rho_\xi\ge\tfrac{\alpha}{\lambda}$. The solutions generated by the MH-F algorithm converge to the solutions of equations \eqref{eq:coupled alg}, provided the stabilization parameters $\rho_p\ge |c_{\alpha\beta}|$ and $\rho_T\ge|c_{\alpha\beta}|$.

\end{theorem}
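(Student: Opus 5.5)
The approach is the standard contraction argument for iterative splittings. Fix a time level $n$. Let $(\bm u_h^n,\xi_h^n,p_h^n,T_h^n)$ be the solution of \eqref{eq:coupled alg}, and define the iteration errors $e_u^i=\bm u_h^{n,i}-\bm u_h^n$, $e_\xi^i=\xi_h^{n,i}-\xi_h^n$, $e_p^i=p_h^{n,i}-p_h^n$, $e_T^i=T_h^{n,i}-T_h^n$. For each algorithm, subtract \eqref{TP_Model_dis_a}--\eqref{TP_Model_dis_d} from the iteration equations. The data $\tilde g,\tilde H_s$ cancel, and we obtain a linear error system. The only nonlinear remainder is the convective term
\[
\Delta t\,c_f\big(\bm K\nabla p_h^{n,i-1}\cdot\nabla T_h^{n,i-1}-\bm K\nabla p_h^{n}\cdot\nabla T_h^{n},S_h\big),
\]
which we split as $(\bm K\nabla e_p^{i-1}\cdot\nabla T_h^{n}+\bm K\nabla p_h^{n,i-1}\cdot\nabla e_T^{i-1},S_h)$. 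We then test with $\bm v_h=e_u^i$, $\phi_h=e_\xi^i$, $q_h=e_p^i$, $S_h=e_T^i$ and add the equations. The mechanics pair $(\bm v_h,\phi_h)$ yields $2\mu\|\bm\varepsilon(e_u^i)\|^2+\tfrac1\lambda\|e_\xi^i\|^2$ plus the coupling terms $-\tfrac\alpha\lambda(e_p,e_\xi)$ and $-\tfrac\beta\lambda(e_T,e_\xi)$, with iteration index $i$ or $i-1$ depending on the algorithm. The discrete inf-sup condition gives $\|e_\xi^i\|\le C\mu\|\bm\varepsilon(e_u^i)\|$ (from the first equation), which controls $\xi$ uniformly in $\lambda$.

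The stabilization terms are handled by the polarization identity $2(a-b,a)=\|a\|^2-\|b\|^2+\|a-b\|^2$. For MFH and M-FH, the coupled storage block has the form $c_\alpha\|e_p\|^2+2c_{\alpha\beta}(e_p,e_T)+c_\beta\|e_T\|^2$, and combined with the $\tfrac1\lambda$-terms it equals
\[
\tfrac1\lambda\|e_\xi-\alpha e_p-\beta e_T\|^2+c_0\|e_p\|^2+a_0\|e_T\|^2-2b_0(e_p,e_T),
\]
which is bounded below by $(c_0-b_0)\|e_p\|^2+(a_0-b_0)\|e_T\|^2$ by (A3). In M-FH the $\xi$-couplings are lagged. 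Using the mechanics equation, we rewrite $\tfrac\alpha\lambda(e_\xi^{i-1},e_p^i)$ in terms of $\nabla\!\cdot e_u$ and absorb it by Young's inequality into the $i$ and $i-1$ energies, obtaining the telescoping structure.

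For MF-H, MH-F and M-F-H, the cross terms $c_{\alpha\beta}(e_T^{i-1},e_p^i)$, $c_{\alpha\beta}(e_p^{i-1},e_T^i)$ and the lagged $\tfrac\alpha\lambda,\tfrac\beta\lambda$ couplings are bounded via Young as $\tfrac{|c_{\alpha\beta}|}{2}(\|e_T^{i-1}\|^2+\|e_p^i\|^2)$, and so on. The stabilization contributions $\tfrac{\rho}{2}(\|e^i\|^2-\|e^{i-1}\|^2)$ absorb these precisely when $\rho_p,\rho_T,\rho_\xi$ satisfy the stated thresholds (for example $\rho_p\ge|c_{\alpha\beta}|+\tfrac\alpha\lambda$ for MF-H). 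This yields
\[
E^i+D^i\le E^{i-1}-\text{(gain)}+\mathcal N^{i-1},
\]
where $E^i=\tfrac{\rho_p}2\|e_p^i\|^2+\tfrac{\rho_T}2\|e_T^i\|^2+\tfrac{\rho_\xi}2\|e_\xi^i\|^2$ and $D^i$ collects the coercive terms $(c_0-b_0)\|e_p^i\|^2$, $(a_0-b_0)\|e_T^i\|^2$, $\Delta t\,k_m\|\nabla e_p^i\|^2$, $\Delta t\,\theta_m\|\nabla e_T^i\|^2$.

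The main obstacle is the nonlinear remainder $\mathcal N^{i-1}$. We need a uniform $L^\infty$ bound on $\nabla T_h^n$ and $\nabla p_h^{n,i-1}$; I expect this is where $C_u$ and the inverse estimate enter. Specifically, we use $\|\nabla w_h\|_{L^\infty}\le C_{inv}h^{-1}\|\nabla w_h\|$ together with a uniform bound $C_u$ on the discrete iterates, established inductively. This gives
\[
|\mathcal N^{i-1}|\le \Delta t\,c_fk_MC_u^2C_{inv}^2h^{-2}\big(\|e_p^{i-1}\|^2+\|e_T^{i-1}\|^2\big)+\tfrac12\Delta t\,c_f\|e_T^i\|^2.
\]
Poincaré's inequality $\|w\|\le C_p\|\nabla w\|$ lets the diffusion terms contribute $\Delta t\,k_mC_p^{-2}$-type control in $L^2$; the exact placement of $C_p$ should follow the paper's convention. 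The time-step restriction in Theorem \ref{th: convergence of all alg} is exactly what makes
\[
\Delta t\big(c_fk_MC_u^2C_{inv}^2h^{-2}-k_mC_p^2\big)<c_0-b_0,
\]
together with the analogous condition for $T$, hold. Consequently the full error functional $\mathcal E^i=E^i+\text{(coercive }L^2\text{ part)}$ satisfies $\mathcal E^i\le\gamma\,\mathcal E^{i-1}$ with $\gamma<1$, or at least $\sum_i D^i<\infty$ by summing the telescoping inequality over $i$. Either way $e_p^i,e_T^i\to0$. Convergence of $e_\xi^i$ and $e_u^i$ then follows from the mechanics error equations and the inf-sup stability, since their right-hand sides are driven by $e_p^{i},e_T^{i}$ (or $e_p^{i-1},e_T^{i-1}$).
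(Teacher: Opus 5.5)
Your handling of MFH, the bound on the convective remainder via $C_u$ and the inverse inequality, the Poincar\'e step, and the role of the time-step restriction all agree with the paper. The gap is in the algorithms where the $p$--$T$ coupling is lagged (MF-H, MH-F, M-F-H).

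Suppose you split $c_{\alpha\beta}(e_T^{i-1},e_p^i)+c_{\alpha\beta}(e_p^{i-1},e_T^i)$ by Young into $\tfrac{|c_{\alpha\beta}|}{2}(\|e_p^i\|^2+\|e_T^i\|^2)+\tfrac{|c_{\alpha\beta}|}{2}(\|e_p^{i-1}\|^2+\|e_T^{i-1}\|^2)$. Then the stabilization cannot absorb the level-$(i-1)$ half. The term $\tfrac{\rho}{2}(\|e^i\|^2-\|e^{i-1}\|^2)$ carries the same weight at $i$ and $i-1$, so $\rho$ drops out of the recursion. Closing it would require the level-$i$ storage coefficients to dominate $|c_{\alpha\beta}|$ (roughly $c_0>|c_{\alpha\beta}|$ and $c_\beta>|c_{\alpha\beta}|$). (A3) does not give this: for $b_0=0$, $|c_{\alpha\beta}|=\alpha\beta/\lambda$ can exceed $c_0$. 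So the thresholds $\rho_p,\rho_T\ge|c_{\alpha\beta}|\,(+\tfrac\alpha\lambda)$ are not what makes your argument work. Stabilization can only help through the increment $\tfrac\rho2\|e^i-e^{i-1}\|^2$ that you discard, and only after the lagged terms are rewritten in terms of increments.

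The missing step, which is the paper's, is to reinstate the implicit coupling. Set $\delta_p=e_p^i-e_p^{i-1}$, $\delta_T=e_T^i-e_T^{i-1}$ and $s=\text{sgn}(c_{\alpha\beta})$. Write
$$c_{\alpha\beta}(e_T^{i-1},e_p^i)+c_{\alpha\beta}(e_p^{i-1},e_T^i)=2c_{\alpha\beta}(e_p^i,e_T^i)-c_{\alpha\beta}\big[(\delta_T,e_p^i)+(\delta_p,e_T^i)\big],$$
and combine the increment part with the stabilization via
\begin{equation*}
\begin{aligned}
&\rho_p(\delta_p,e_p^i)+\rho_T(\delta_T,e_T^i)-c_{\alpha\beta}\big[(\delta_T,e_p^i)+(\delta_p,e_T^i)\big]\\
&\quad=(\rho_p-|c_{\alpha\beta}|)(\delta_p,e_p^i)+(\rho_T-|c_{\alpha\beta}|)(\delta_T,e_T^i)+|c_{\alpha\beta}|\,(\delta_p-s\,\delta_T,\,e_p^i-s\,e_T^i).
\end{aligned}
\end{equation*}
For $\rho_p,\rho_T\ge|c_{\alpha\beta}|$, each term on the right has the form $B(u,u-v)$ with $B$ positive semidefinite, so Lemma~\ref{le: B(u,v)} telescopes it. The left side now contains the full coupled form
$$c_\alpha\|e_p^i\|^2+2c_{\alpha\beta}(e_p^i,e_T^i)+c_\beta\|e_T^i\|^2\ge(c_0-b_0)\|e_p^i\|^2+(a_0-b_0)\|e_T^i\|^2+\tfrac1\lambda\|\alpha e_p^i+\beta e_T^i\|^2.$$

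In MF-H the paper also applies the same identity in reverse to the implicit $\tfrac\alpha\lambda$ coupling. This is the origin of the extra $\tfrac\alpha\lambda$ in $\rho_p$ and of the condition $\rho_\xi\ge\tfrac\alpha\lambda$. After that, all remaining lagged $\tfrac1\lambda$-couplings pair with $\tfrac1\lambda\|e_\xi^i\|^2$ and $\tfrac1\lambda\|\alpha e_p^i+\beta e_T^i\|^2$ and are handled by Young as in M-FH. This gives the strict level-$i$ gain that the time-step condition weighs against the level-$(i-1)$ convective remainder, which is the paper's $(1+\delta)\mathcal A^i+\mathcal B^i<\mathcal A^{i-1}+\mathcal B^{i-1}$.

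Two smaller points:
\begin{itemize}
\item For M-FH no rewriting through the mechanics equation is needed, and doing so would bring in iterate $i-2$. Plain Young against those two $\tfrac1\lambda$-terms already telescopes.
\item In MF-H and MH-F the mechanics error equation carries $\rho_\xi e_\xi^{i-1}$ on its right side. So $e_\xi^i\to0$ needs its own contraction step, e.g.\ $\|e_\xi^i\|\le\frac{\rho_\xi}{\rho_\xi+1/\lambda}\|e_\xi^{i-1}\|+o(1)$, and does not follow from $e_p^i,e_T^i\to0$ alone.
\end{itemize}
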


\section{Proofs of Error Estimates and Convergence Analysis}\label{sec: the error estimate and convergence}
‌In this section, we present the proofs for the estimates of the nonlinear model \eqref{eq:coupled alg} and the convergence of the proposed iterative algorithms.

We introduce suitable projection operators.  
For the discrete finite element spaces \( \bm{V}_h \), \( Q_h \) satisfying inf-sup condition, and \( W_h \) defined in subsection~\ref{eq:coupled alg}, we define projections
\[
\Pi^{\bm{V}_h}: \bm{V} \to \bm{V}_h, \quad 
\Pi^{Q_h}: Q \to Q_h, \quad 
\Pi^{W_{p,h}}: W \to W_h, \quad 
\Pi^{W_{T,h}}: W \to W_h,
\]
which satisfy the following relations for all 
\((\bm v_h,\phi_h,p_h,T_h)\in \bm V_h\times Q_h\times W_h\times W_h\):
\begin{equation}\label{Pi V}
   2\mu(\bm\varepsilon(\Pi^{\bm V_h}\bm u ),\bm\varepsilon( \bm v_h ))
   - (\nabla\cdot\bm v_h,\Pi^{Q_h} \xi)
   = 2\mu(\bm\varepsilon( \bm u ),\bm\varepsilon( \bm v_h ))
   - (\nabla\cdot\bm v_h, \xi),
\end{equation} 
\begin{equation}\label{Pi Q}
   (\nabla\cdot\Pi^{\bm V_h}\bm u,\phi_h)+\tfrac1\lambda(\Pi^{Q_h} \xi,\phi_h) = (\nabla\cdot \bm u,\phi_h)+\tfrac1\lambda( \xi,\phi_h),
\end{equation} 
\begin{equation}\label{Pi Wp}
   (\bm K\nabla\Pi^{W_{p,h}}p,\nabla q_h) = (\bm K\nabla p,\nabla q_h),
\end{equation} 
\begin{equation}\label{Pi WT}
   (\bm \Theta\nabla\Pi^{W_{T,h}}T,\nabla S_h) = (\bm\Theta\nabla T,\nabla S_h).
\end{equation}
Furthermore, for the operators $\Pi^{\bm V_h}$,~$\Pi^{Q_h} $,~$\Pi^{W_{p,h}}$, and $\Pi^{W_{T,h}}$, if $\bm u\in \bm H_{0}^{k+1}(\Omega)$, $\xi\in H^{s+1}(\Omega)$, $p,~T\in H_0^{l+1}$ we have following
 properties. 
\begin{equation}\label{Pi V Q properties}
   \|\nabla(\Pi^{\bm V_h}\bm u-\bm u)\|_{L^2(\Omega)}+\|\Pi^{Q_h} \xi-\xi\|_{L^2(\Omega)}
   \lesssim h^k\|\bm u\|_{H^{k+1}(\Omega)}+h^{s+1}\|\xi\|_{H^{s+1}(\Omega)}, 
\end{equation} 
\begin{equation}\label{Pi Wp properties}
   \| \Pi^{W_{p,h}}p-p \|_{L^2(\Omega)}+h\|\nabla(\Pi^{W_{p,h}}p-p)\|_{L^2(\Omega)}
   \lesssim h^{l+1} \|p\|_{H^{l+1}(\Omega)},
\end{equation} 
and
\begin{equation}\label{Pi WT properties}
     \| \Pi^{W_{T,h}}T-T \|_{L^2(\Omega)}+h\|\nabla(\Pi^{W_{T,h}}T-T)\|_{L^2(\Omega)}
   \lesssim h^{l+1} \|T\|_{H^{l+1}(\Omega)}.
\end{equation} 

We also need to cite the following two lemmas, which are frequently used in our subsequent analysis. These two lemmas are easy to obtain, and their proofs can also be found in \cite{gu2023priori}.
\begin{lemma}\label{le: B(u,v)}
    Let \(B\) be a symmetric and positive semi-definite bilinear form. Then \[     
    2B(u,u-v)=B(u,u)-B(v,v)+B(u-v,u-v),     \] is satisfied, which immediately implies the following inequality \[
    2B(u,u-v)\ge B(u,u)-B(v,v).
    \]
\end{lemma}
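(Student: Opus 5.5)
The identity is purely algebraic: it follows from bilinearity and symmetry of $B$ alone. The inequality then comes from positive semi-definiteness. I will expand the right-hand side and compare it with the left.

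First, expand the last term using bilinearity:
\[
B(u-v,u-v)=B(u,u)-B(u,v)-B(v,u)+B(v,v).
\]
Symmetry gives $B(v,u)=B(u,v)$, so
\[
B(u-v,u-v)=B(u,u)-2B(u,v)+B(v,v).
\]
Adding $B(u,u)-B(v,v)$ to both sides, the right-hand side of the claimed identity becomes
\[
2B(u,u)-2B(u,v).
\]
By linearity in the second argument, this equals $2B(u,u-v)$, which proves the identity.

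For the inequality, positive semi-definiteness gives $B(u-v,u-v)\ge 0$. Dropping this term from the identity yields
\[
2B(u,u-v)\ge B(u,u)-B(v,v).
\]

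There is no real obstacle here. The only point needing care is to state explicitly where symmetry is used, namely to merge the two cross terms into $-2B(u,v)$. Without symmetry the cross terms would not combine and the identity would fail.
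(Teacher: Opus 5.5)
Your proof is correct and is the standard argument. You expand $B(u-v,u-v)$ by bilinearity, use symmetry to merge the cross terms, and drop the term $B(u-v,u-v)\ge 0$ to get the inequality; the paper gives no proof of its own (it calls the lemma easy and defers to an external reference), and your expansion is that omitted argument.
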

 
For the subsequent analysis, throughout this work, we adopt the following notation:
Let $e_{\bm u}^n$, $e_{\xi}^n$, $e_{p}^n$, $e_{T}^n$ represent the discrepancies between the exact solutions of \eqref{TP_Model_var after parameters transformation} at temporal index $n$ and the numerical approximations computed via \eqref{TP_Model_dis_a}-\eqref{TP_Model_dis_d}. 
Each error component is partitioned into interpolation and discretization components as follows:
\begin{equation}\label{eq: e=eI+eh}
\begin{aligned} 
 &e_{\bm u}^n:=\bm u^n-\bm u_h^n=( \bm u^n-\Pi^{\bm V_h}\bm u^n)+(\Pi^{\bm V_h}\bm u^n -\bm u_h^n):=e_{\bm u}^{I,n}+e_{\bm u}^{h,n},\\
 &e_{\xi}^n:=\xi^n-\xi_h^n=(\xi^n-\Pi^{Q_h}\xi^n)+(\Pi^{Q_h}\xi^n -\xi_h^n):=e_{\xi}^{I,n}+e_{\xi}^{h,n},\\ 
 &e_{p}^n:=p^n-p_h^n=(p^n-\Pi^{W_{p,h}} p^n)+(\Pi^{W_{p,h}} p^n -p_h^n):=e_{p}^{I,n}+e_{p}^{h,n},\\ 
 &e_{T}^n:=T^n-T_h^n=(T^n-\Pi^{W_{T,h}} T^n)+(\Pi^{W_{T,h}} T^n -T_h^n):=e_{T}^{I,n}+e_{T}^{h,n}.\\
\end{aligned}
\end{equation} 
Specifically, we set $\bm u_h^0=\Pi^{\bm V_h}\bm u^0$, $\xi_h^0=\Pi^{Q_h}\xi^0$, 
$p_h^0=\Pi^{W_{p,h}}p^0$, and $T_h^0=\Pi^{W_{T,h}}T^0$.
Moreover, we define the temporal increments as:
\begin{equation}\label{diff_real_num D}
\begin{aligned} 
     (d_{\bm u}^{n+1},~d_{\xi}^{n+1},~d_{p}^{n+1},~d_{T}^{n+1}):=
     (e_{\bm u}^{h,n+1},~e_{\xi}^{h,n+1},~e_{p}^{h,n+1},~e_{T}^{h,n+1})
    -(e_{\bm u}^{h,n},~e_{\xi}^{h,n},~e_{p}^{h,n},~e_{T}^{h,n}).
\end{aligned}
\end{equation}

We now state the error estimate of the algorithm in equations \eqref {TP_Model_dis_a}-\eqref{TP_Model_dis_d}.

\begin{proof}[\textbf{The proof of Theorem \ref{th: error estimate}:}]
Letting the test functions be the discrete test functions and subtracting \eqref{TP_Model_var after parameters transformation} from \eqref{TP_Model_dis_a}-\eqref{TP_Model_dis_d} at \(t=t_{n+1}\), we obtain 
\begin{equation} 
\begin{aligned} 
2\mu(\bm\varepsilon(e_{\bm u}^{n+1}), \bm\varepsilon(\bm v_h)) 
                        - (\nabla\cdot\bm v_h,e_{\xi}^{n+1})&=0,  \\
  (\nabla\cdot e_{\bm u}^{n+1} ,\phi_h)
  +\tfrac{1}{\lambda}(e_{\xi}^{n+1} ,\phi_h)
  -\tfrac{\alpha}{\lambda}(e_{p}^{n+1} ,\phi_h)
  -\tfrac{\beta}{\lambda}(e_{T}^{n+1} ,\phi_h)&=0 ,\\
 -\tfrac\alpha\lambda( \xi_t^{n+1} - \tfrac{\xi^{n+1}_h-\xi^{n}_h}{\Delta t} ,q_h)
 +c_\alpha( p_t^{n+1}-\tfrac{p^{n+1}_h-p^{n}_h}{\Delta t} ,q_h)
 + (\bm K\nabla e_p^{n+1},\nabla q_h)&\\
 + c_{\alpha\beta}( T_t^{n+1}-\tfrac{T^{n+1}_h-T^{n}_h}{\Delta t} ,q_h)   &=0,\\
 -\tfrac\beta\lambda( \xi^{n+1}_t- \tfrac{\xi^{n+1}_h-\xi^{n}_h}{\Delta t},S_h)
 +c_\beta( T_t^{n+1}-\tfrac{T^{n+1}_h-T^{n}_h}{\Delta t} ,S_h) 
  +(\bm\Theta\nabla e_T^{n+1},\nabla S_h)&\\
-c_f( \bm K \nabla p^{n+1}\cdot\nabla T^{n+1}-\bm K \nabla p_h^{n+1}\cdot\nabla T_h^{n+1} ,S_h)
  +c_{\alpha\beta}( p_t^{n+1}-\tfrac{p^{n+1}_h-p^{n}_h}{\Delta t} ,S_h)&=0.\\
\end{aligned}
\end{equation}  
Using the projection operators introduced in \eqref{Pi V}, \eqref{Pi Q}, \eqref{Pi Wp}, and \eqref{Pi WT}, the above equations can be rewritten as
\begin{equation}\label{eq: evun+1-evxin+1=0}
\begin{aligned} 
2\mu(\bm\varepsilon(e_{\bm u}^{h,n+1}), \bm\varepsilon(\bm v_h)) 
                        - (\nabla\cdot\bm v_h,e_{\xi}^{h,n+1})&=0,  \\
\end{aligned}
\end{equation}  
\begin{equation}\label{eq: -eun+1-exin+1+pn+1+Tn+1=0}
\begin{aligned} 
    (\nabla\cdot e_{\bm u}^{h,n+1} ,\phi_h)
  +\tfrac{1}{\lambda}(e_{\xi}^{h,n+1} ,\phi_h)
  -\tfrac{\alpha}{\lambda}(e_{p}^{n+1},\phi_h)
  -\tfrac{\beta}{\lambda}(e_{T}^{n+1} ,\phi_h)&=0 ,\\
\end{aligned}
\end{equation} 
\begin{equation}\label{eq: reaction dis Dt a}
\begin{aligned} 
 &c_\alpha( d_{p}^{n+1},q_h)
 + \Delta t(\bm K\nabla e_p^{h,n+1},\nabla q_h)
 + c_{\alpha\beta}(d_{T}^{n+1},q_h) -\tfrac{\alpha}{\lambda}(d_\xi^{n+1},q_h) \\  
 =&\tfrac\alpha\lambda( \Delta t\xi_t^{n+1} -  \Pi^{Q_h}\xi^{n+1} + \Pi^{Q_h}\xi^{n},q_h) \\
 & +c_\alpha( \Pi^{W_{p,h}} p^{n+1}-\Pi^{W_{p,h}} p^{n}-\Delta tp_t^{n+1},q_h) \\
 & + c_{\alpha\beta}(\Pi^{W_{T,h}} T^{n+1}-\Pi^{W_{T,h}} T^{n}-\Delta tT_t^{n+1},q_h),\\
 \end{aligned}
\end{equation}  
\begin{equation}\label{eq: reaction dis Dt b}
\begin{aligned} 
& c_\beta(d_{T}^{n+1},S_h)   
  +\Delta t(\bm\Theta\nabla e_T^{h,n+1},\nabla S_h)
  +c_{\alpha\beta}( d_{p}^{n+1},S_h)  -\tfrac\beta\lambda(d_\xi^{n+1},S_h) \\
 =& \tfrac\beta\lambda( \Delta t\xi_t^{n+1}- \Pi^{Q_h}\xi^{n+1} +\Pi^{Q_h}\xi^{n} ),S_h) \\
 &  +c_{\alpha\beta}(\Pi^{W_{p,h}} p^{n+1}-\Pi^{W_{p,h}} p^{n}-\Delta tp_t^{n+1} ,S_h) \\
&   +c_\beta(\Pi^{W_{T,h}} T^{n+1}-\Pi^{W_{T,h}} T^{n}-\Delta tT_t^{n+1},S_h)\\
&+\Delta tc_f(\bm K \nabla p^{n+1}\cdot\nabla T^{n+1}- \bm K \nabla p_h^{n+1}\cdot\nabla T_h^{n+1},S_h) .\\
\end{aligned}
\end{equation}  
Differentiating the second equation of \ref{TP_Model_var after parameters transformation} with respect to $t$ at the
$(n+1)$-th time step, then multiplying the resulted equation by $\Delta t$, we derive that
\begin{equation} \label{eq:Dt  -u-xi+p+T=0}
\begin{aligned}  
  -(\nabla\cdot (\Delta t\bm u_t^{n+1}),\phi)-\tfrac{1}{\lambda}(\Delta t\xi_t^{n+1},\phi)+\tfrac{\alpha}{\lambda}(\Delta tp_t^{n+1},\phi)+\tfrac{\beta}{\lambda}(\Delta tT_t^{n+1},\phi)&=0. 
\end{aligned}
\end{equation}  
For \eqref{eq: -eun+1-exin+1+pn+1+Tn+1=0} and the second equation of \eqref{TP_Model_var after parameters transformation},  taking a difference between $t=t_{n+1}$ and $t=t_n$, we have
\begin{equation}\label{eq:Dt -eun+1-exin+1+pn+1+Tn+1=0}
    -(\nabla\cdot d_{\bm u}^{n+1},\phi_h)
  -\tfrac{1}{\lambda}(d_{\xi}^{n+1},\phi_h)
  +\tfrac{\alpha}{\lambda}(e_{p}^{n+1}-e_{p}^{n},\phi_h)
  +\tfrac{\beta}{\lambda}(e_{T}^{n+1}-e_{T}^{n} ,\phi_h) =0 , 
\end{equation}  
and 
\begin{equation}\label{eq:n+1n -u-xi+p+T=0} 
  -(\nabla\cdot(\bm u^{n+1}-\bm u^n),\phi)-\tfrac{1}{\lambda}(\xi^{n+1}-\xi^n,\phi)
+\tfrac{\alpha}{\lambda}(p^{n+1}-p^n,\phi)+\tfrac{\beta}{\lambda}(T^{n+1}-T^n,\phi) =0. 
\end{equation}  
By using the definitions of $d_{p}^{n+1}$ and $d_{T}^{n+1}$ we reformulate \eqref{eq:Dt -eun+1-exin+1+pn+1+Tn+1=0} as
\begin{equation} \label{eq:reform Dt -eun+1-exin+1+pn+1+Tn+1=0} 
\begin{aligned}  
  &  (\nabla\cdot d_{\bm u}^{n+1},\phi_h)
  +\tfrac{1}{\lambda}(d_{\xi}^{n+1},\phi_h)
  -\tfrac{\alpha}{\lambda}( d_{p}^{n+1},\phi_h)
  -\tfrac{\beta}{\lambda}(  d_{T}^{n+1},\phi_h) \\
= & \tfrac{\alpha}{\lambda}(p^{n+1}-p^n  ,\phi_h)
 -\tfrac{\alpha}{\lambda}( \Pi^{W_{p,h}} p^{n+1}-\Pi^{W_{p,h}} p^{n} ,\phi_h) \\
&+\tfrac{\beta}{\lambda}( T^{n+1}-T^n,\phi_h)-\tfrac{\beta}{\lambda}(\Pi^{W_{T,h}} T^{n+1}-\Pi^{W_{T,h}} T^{n},\phi_h).
\end{aligned}
\end{equation}  
Taking $\phi=\phi_h$ in \eqref{eq:Dt  -u-xi+p+T=0} and \eqref{eq:n+1n -u-xi+p+T=0}, we combine \eqref{eq:reform Dt -eun+1-exin+1+pn+1+Tn+1=0} to get
\begin{equation} \label{eq:reform Dt -eun+1-exin+1+pn+1+Tn+1=0 +u} 
\begin{aligned}  
  &   (\nabla\cdot d_{\bm u}^{n+1},\phi_h)
  +\tfrac{1}{\lambda}(d_{\xi}^{n+1},\phi_h)
  -\tfrac{\alpha}{\lambda}( d_{p}^{n+1},\phi_h)
  -\tfrac{\beta}{\lambda}(  d_{T}^{n+1},\phi_h) \\
=& (\nabla\cdot(\bm u^{n+1}-\bm u^n-\Delta t\bm u_t^{n+1}),\phi_h)\\
&  +  \tfrac{1}{\lambda}( \xi^{n+1}- \xi^{n}-\Delta t\xi_t^{n+1}, \phi_h)  \\
&   -\tfrac{\alpha}{\lambda}( \Pi^{W_{p,h}} p^{n+1}-\Pi^{W_{p,h}} p^{n}-\Delta tp_t^{n+1} ,\phi_h) \\
& -\tfrac{\beta}{\lambda}(\Pi^{W_{T,h}} T^{n+1}-\Pi^{W_{T,h}} T^{n}-\Delta tT_t^{n+1},\phi_h),
\end{aligned}
\end{equation}  
Choosing $\bm v_h=d_{\bm u}^{n+1}$ in \eqref{eq: evun+1-evxin+1=0}, $\phi_h=e_\xi^{h,n+1}$ in \eqref{eq:reform Dt -eun+1-exin+1+pn+1+Tn+1=0 +u}, $q_h=e_p^{h,n+1}$ in \eqref{eq: reaction dis Dt a} and  $S_h=e_T^{h,n+1}$ in \eqref{eq: reaction dis Dt b}, we derive 
\begin{equation}\label{eq: evun+1-evxin+1=0 vh=dun+1}
\begin{aligned} 
2\mu(\bm\varepsilon(e_{\bm u}^{h,n+1}), \bm\varepsilon(d_{\bm u}^{n+1})) 
                        - (\nabla\cdot d_{\bm u}^{n+1},e_{\xi}^{h,n+1})&=0,  \\
\end{aligned}
\end{equation}  
\begin{equation} \label{eq:reform Dt -eun+1-exin+1+pn+1+Tn+1=0 phih=exi} 
\begin{aligned}  
  &  (\nabla\cdot d_{\bm u}^{n+1},e_\xi^{h,n+1})
  +\tfrac{1}{\lambda}(d_{\xi}^{n+1},e_\xi^{h,n+1})
  -\tfrac{\alpha}{\lambda}( d_{p}^{n+1},e_\xi^{h,n+1})
  -\tfrac{\beta}{\lambda}(  d_{T}^{n+1},e_\xi^{h,n+1}) \\
=&  (\nabla\cdot(\bm u^{n+1}-\bm u^n-\Delta t\bm u_t^{n+1}),e_\xi^{h,n+1}) \\
 &  +  \tfrac{1}{\lambda}(\xi^{n+1}-\xi^{n}-\Delta t\xi_t^{n+1},e_\xi^{h,n+1})  \\
&   -\tfrac{\alpha}{\lambda}( \Pi^{W_{p,h}} p^{n+1}-\Pi^{W_{p,h}} p^{n}-\Delta tp_t^{n+1} ,e_\xi^{h,n+1}) \\
& -\tfrac{\beta}{\lambda}(\Pi^{W_{T,h}} T^{n+1}-\Pi^{W_{T,h}} T^{n}-\Delta tT_t^{n+1},e_\xi^{h,n+1}),
\end{aligned}
\end{equation}  
 \begin{equation}\label{eq: reaction dis Dt a qh=ep}
\begin{aligned} 
 &c_\alpha( d_{p}^{n+1},e_p^{h,n+1})
 + \Delta t(\bm K\nabla e_p^{h,n+1},\nabla e_p^{h,n+1})
 + c_{\alpha\beta}(d_{T}^{n+1},e_p^{h,n+1})  -\tfrac{\alpha}{\lambda}(d_\xi^{n+1},e_p^{h,n+1})\\  
= &\tfrac\alpha\lambda( \Delta t\xi_t^{n+1} - \Pi^{Q_h}\xi^{n+1}+\Pi^{Q_h}\xi^{n} ,e_p^{h,n+1}) \\
&  +c_\alpha( \Pi^{W_{p,h}} p^{n+1}-\Pi^{W_{p,h}} p^{n}-\Delta tp_t^{n+1},e_p^{h,n+1}) \\
 & + c_{\alpha\beta}(\Pi^{W_{T,h}} T^{n+1}-\Pi^{W_{T,h}} T^{n}-\Delta tT_t^{n+1},e_p^{h,n+1}),\\
 \end{aligned}
\end{equation}  
and
\begin{equation}\label{eq: reaction dis Dt b Sh=eT}
\begin{aligned} 
& c_\beta(d_{T}^{n+1},e_T^{h,n+1})   
  +\Delta t(\bm\Theta\nabla e_T^{h,n+1},\nabla e_T^{h,n+1})
  +c_{\alpha\beta}( d_{p}^{n+1},e_T^{h,n+1})   -\tfrac\beta\lambda(d_\xi^{n+1},e_T^{h,n+1})\\
 =& \tfrac\beta\lambda( \Delta t\xi_t^{n+1}- \Pi^{Q_h}\xi^{n+1}+\Pi^{Q_h}\xi^{n} ,e_T^{h,n+1}) \\
 &  +c_{\alpha\beta}(\Pi^{W_{p,h}} p^{n+1}-\Pi^{W_{p,h}} p^{n}-\Delta tp_t^{n+1} ,e_T^{h,n+1}) \\
&   +c_\beta(\Pi^{W_{T,h}} T^{n+1}-\Pi^{W_{T,h}} T^{n}-\Delta tT_t^{n+1},e_T^{h,n+1})\\
&+\Delta tc_f(\bm K \nabla p^{n+1} \cdot\nabla T^{n+1}-\bm K \nabla p^{n+1}_h \cdot\nabla T^{n+1}_h,e_T^{h,n+1}) .\\
\end{aligned}
\end{equation}  
Summing equations \eqref{eq: evun+1-evxin+1=0 vh=dun+1}, \eqref{eq:reform Dt -eun+1-exin+1+pn+1+Tn+1=0 phih=exi}, \eqref{eq: reaction dis Dt a qh=ep}, and \eqref{eq: reaction dis Dt b Sh=eT} over the index $n$  from  $0$  to  $N-1$ yields:
\begin{equation}\label{eq: LHS=Ei}
\begin{aligned} 
LHS
:=&\sum_{n=0}^{N-1}\Big(
2\mu(\bm\varepsilon(e_{\bm u}^{h,n+1}), \bm\varepsilon(d_{\bm u}^{n+1}))   
  +\tfrac{1}{\lambda}(d_{\xi}^{n+1},e_\xi^{h,n+1}) 
  -\tfrac{\alpha}{\lambda}( d_{p}^{n+1},e_\xi^{h,n+1})\\
 & -\tfrac{\beta}{\lambda}(  d_{T}^{n+1},e_\xi^{h,n+1}) 
  +c_\alpha( d_{p}^{n+1},e_p^{h,n+1})
 + \Delta t(\bm K\nabla e_p^{h,n+1},\nabla e_p^{h,n+1})
    \\
& + c_{\alpha\beta}(d_{T}^{n+1},e_p^{h,n+1})+ c_\beta(d_{T}^{n+1},e_T^{h,n+1})   
  +\Delta t(\bm\Theta\nabla e_T^{h,n+1},\nabla e_T^{h,n+1})
  \\ 
&+c_{\alpha\beta}( d_{p}^{n+1},e_T^{h,n+1})  
 -\tfrac{\alpha}{\lambda}(d_\xi^{n+1},e_p^{h,n+1})
 -\tfrac\beta\lambda(d_\xi^{n+1},e_T^{h,n+1})\Big)\\
=&\sum_{i=1}^{11}E_i,
\end{aligned}
\end{equation}  
where 
$$
E_1= (\nabla\cdot(\bm u^{n+1}-\bm u^n-\Delta t\bm u_t^{n+1}),e_\xi^{h,n+1})  ,
$$
$$
E_2=    \tfrac{1}{\lambda}( \xi^{n+1}-\xi^{n}-\Delta t\xi_t^{n+1},e_\xi^{h,n+1}) ,
$$
$$
E_3=  -\tfrac{\alpha}{\lambda}( \Pi^{W_{p,h}} p^{n+1}-\Pi^{W_{p,h}} p^{n}-\Delta tp_t^{n+1} ,e_\xi^{h,n+1}),
$$
$$
E_4=   -\tfrac{\beta}{\lambda}(\Pi^{W_{T,h}} T^{n+1}-\Pi^{W_{T,h}} T^{n}-\Delta tT_t^{n+1},e_\xi^{h,n+1}),
$$
$$
E_5=\tfrac\alpha\lambda( \Delta t\xi_t^{n+1} - \Pi^{Q_h}\xi^{n+1} +\Pi^{Q_h}\xi^{n}  ,e_p^{h,n+1})  ,
$$
$$
E_6= c_\alpha( \Pi^{W_{p,h}} p^{n+1}-\Pi^{W_{p,h}} p^{n}-\Delta tp_t^{n+1},e_p^{h,n+1}) ,
$$
$$
E_7=  c_{\alpha\beta}(\Pi^{W_{T,h}} T^{n+1}-\Pi^{W_{T,h}} T^{n}-\Delta tT_t^{n+1},e_p^{h,n+1}),
$$
$$
E_{8}= \tfrac\beta\lambda( \Delta t\xi_t^{n+1}- \Pi^{Q_h}\xi^{n+1} +\Pi^{Q_h}\xi^{n} ,e_T^{h,n+1}) ,
$$
$$
E_{9}= c_{\alpha\beta}(\Pi^{W_{p,h}} p^{n+1}-\Pi^{W_{p,h}} p^{n}-\Delta tp_t^{n+1} ,e_T^{h,n+1}),
$$
$$
E_{10}=  c_\beta(\Pi^{W_{T,h}} T^{n+1}-\Pi^{W_{T,h}} T^{n}-\Delta tT_t^{n+1},e_T^{h,n+1}),
$$
and
$$
E_{11}= \Delta tc_f(\bm K \nabla p^{n+1} \cdot\nabla T^{n+1}-\bm K \nabla p^{n+1}_h \cdot\nabla T^{n+1}_h,e_T^{h,n+1}).
$$
Rearranging \(LHS\) and applying Lemma \ref{le: B(u,v)}, we can estimate the lower bound of $LHS$ as following:
\begin{equation*} 
\begin{aligned}  
&\sum_{n=0}^{N-1}\Big( 
      \mu \|\bm\varepsilon(e_{\bm u}^{h,n+1} )\|^2_{L^2(\Omega)}
              -\mu \|\bm\varepsilon(e_{\bm u}^{h,n} )\|^2_{L^2(\Omega)} \\
  &+\tfrac{1}{2\lambda} \|\ e_{\xi}^{h,n+1}-\alpha e_p^{h,n+1}-\beta 
                      e_T^{h,n+1}\|^2_{L^2(\Omega)}    
   -\tfrac{1}{2\lambda} \|\ e_{\xi}^{h,n}-\alpha e_p^{h,n}-\beta 
                      e_T^{h,n}\|^2_{L^2(\Omega)}  \\
  & +\tfrac{c_0-b_0}{2} \| e_p^{h,n+1} \|^2_{L^2(\Omega)}
                -\tfrac{c_0-b_0}{2}  \| e_p^{h,n} \|^2_{L^2(\Omega)}\\
  & +\tfrac{a_0-b_0}{2} \| e_T^{h,n+1} \|^2_{L^2(\Omega)} 
            -\tfrac{a_0-b_0}{2}  \| e_T^{h,n} \|^2_{L^2(\Omega)} 
    \\
  & + \tfrac{b_0}{2} \| e_p^{h,n+1}-e_T^{h,n+1} \|^2_{L^2(\Omega)} 
           - \tfrac{b_0}{2} \| e_p^{h,n}-e_T^{h,n} \|^2_{L^2(\Omega)} 
           \Big)  \\ 
  &  +\Delta t\sum_{n=0}^{N-1}\Big( 
         k_m \|\nabla e_p^{h,n+1} \|^2_{L^2(\Omega)} + \theta_m \|\nabla e_T^{h,n+1} \|^2_{L^2(\Omega)}
        \Big) \\
\le &\sum_{n=0}^{N-1}\Big( 2\mu(\bm\varepsilon(e_{\bm u}^{h,n+1}), 
                       \bm\varepsilon(d_{\bm u}^{n+1}))\\
  &+\tfrac1\lambda(d_{\xi}^{n+1}-\alpha d_{p}^{n+1}-\beta d_{T}^{n+1}
                ,\ e_{\xi}^{h,n+1}-\alpha e_p^{h,n+1}-\beta e_T^{h,n+1})
                \\ 
  & + (c_0-b_0) ( d_{p}^{n+1} ,e_p^{h,n+1})
    + (a_0-b_0) ( d_{T}^{n+1} ,e_T^{h,n+1})   
    \\
  &  +  b_0( d_{p}^{n+1}-d_{T}^{n+1} ,e_p^{h,n+1}-e_T^{h,n+1})\\
  &  + \Delta t(\bm K\nabla e_p^{h,n+1},\nabla e_p^{h,n+1})  
    +\Delta t(\bm\Theta\nabla e_T^{h,n+1},\nabla e_T^{h,n+1}) 
       \Big) \\
=& LHS  ,\\ 
\end{aligned}
\end{equation*}  
which directly implies that
\begin{equation}\label{ineq: LHS ge etc.}
\begin{aligned}  
 & \tfrac12\Big( 
      2\mu \|\bm \varepsilon(e_{\bm u}^{h,N}) \|^2_{L^2(\Omega)}
                    -\mu \|\bm\varepsilon(e_{\bm u}^{h,0}  )\|^2_{L^2(\Omega)} \\
  &+\tfrac{1}{\lambda} \|\ e_{\xi}^{h,N}-\alpha e_p^{h,N}-\beta 
                      e_T^{h,N}\|^2_{L^2(\Omega)}    
   -\tfrac{1}{\lambda} \|\ e_{\xi}^{h,0}-\alpha e_p^{h,0}-\beta 
                      e_T^{h,0}\|^2_{L^2(\Omega)}  \\
  & +(c_0-b_0) \| e_p^{h,N} \|^2_{L^2(\Omega)}
                -(c_0-b_0)   \| e_p^{h,0} \|^2_{L^2(\Omega)}\\
  & +(a_0-b_0) \| e_T^{h,N} \|^2_{L^2(\Omega)} 
            -(a_0-b_0)  \| e_T^{h,0} \|^2_{L^2(\Omega)} 
    \\
  & +  b_0  \| e_p^{h,N}-e_T^{h,N} \|^2_{L^2(\Omega)} 
           -  b_0  \| e_p^{h,0}-e_T^{h,0} \|^2_{L^2(\Omega)} 
           \Big)  \\ 
  &  +\Delta t\sum_{n=0}^{N-1}\Big( 
           k_m \|\nabla e_p^{h,n+1} \|^2_{L^2(\Omega)} + \theta_m \|\nabla e_T^{h,n+1} \|^2_{L^2(\Omega)}
        \Big)  \le LHS.\\ 
\end{aligned}
\end{equation}

Next, by applying Taylor series expansion‌,  the Cauchy-Schwarz inequality,
 and Young’s inequality, we can bound $E_1$ as:
\begin{equation*} 
\begin{aligned}  
E_1&=\sum_{n=0}^{N-1}(\nabla\cdot(\bm u^{n+1}-\bm u^{n}-\Delta t\bm 
               u_t^{n+1})),e_{\xi}^{h,n+1})\\ 
&\le \tfrac{1}{\Delta t}  \tfrac{1}{2}
\sum_{n=0}^{N-1} \| \bm u^{n+1} -\bm u^{n} - \Delta t 
      \bm u_t^{n+1} \|_{H^1(\Omega)}^2 
+ \tfrac{1}{2}\Delta t \sum_{n=0}^{N-1} \| e_{\xi}^{h,n+1} \|_{L^2(\Omega)}^2 \\
&= \tfrac{1}{\Delta t}  \tfrac{1}{2}\sum_{n=0}^{N-1} \| \int_{t_{n}}^{t_{n+1}}\bm u_{tt}(s) (s-t_{n}) ds \|_{H^1(\Omega)}^2 
+ \tfrac{1}{2}\Delta t  \sum_{n=0}^{N-1} \| e_{\xi}^{h,n+1} \|_{L^2(\Omega)}^2 \\
&\le \tfrac{1}{2}\Delta t^2  \int_{0}^{\tau} \left\| \bm u_{tt} \right\|_{H^1(\Omega)}^2 ds 
+ \tfrac{1}{2}\Delta t \sum_{n=0}^{N-1} \| e_{\xi}^{h,n+1} \|_{L^2(\Omega)}^2,
\end{aligned}
\end{equation*} 
and bound $E_2$ as:
 \begin{equation*} 
\begin{aligned}  
E_2=&\sum_{n=0}^{N-1} \tfrac{1}{\lambda}( \xi^{n+1}- \xi^{n}-\Delta t\xi_t^{n+1},e_{\xi}^{h,n+1})\\ 
  \le&  \tfrac{1}{2}\Delta t^2  \int_{0}^{\tau} \left\| \xi_{tt} \right\|_{L^2(\Omega)}^2 ds 
+ \tfrac{1}{2}\Delta t \sum_{n=0}^{N-1} \| e_{\xi}^{h,n+1} \|_{L^2(\Omega)}^2,                      
\end{aligned}
\end{equation*}

By applying Taylor series expansion‌, the Cauchy-Schwarz inequality, Young’s inequality, and the properties of the operators in \eqref{Pi V Q properties}-\eqref{Pi WT properties}, we can bound $E_3$ as:
 \begin{equation*} 
\begin{aligned}  
E_3=&\sum_{n=0}^{N-1} -\tfrac{\alpha}{\lambda}( \Pi^{W_{p,h}} p^{n+1}-\Pi^{W_{p,h}} p^{n}-\Delta tp_t^{n+1} ,e_\xi^{h,n+1})\\
=&\sum_{n=0}^{N-1}-\tfrac{\alpha}{\lambda}((\Pi^{W_{p,h}} p^{n+1}-\Pi^{W_{p,h}} p^{n})
       -(p^{n+1}-p^{n})+(p^{n+1}-p^{n}-\Delta t p_t^{n}), e_{\xi}^{h,n+1})   \\
\le& \sum_{n=0}^{N-1} \tfrac{\alpha}{\lambda}\Big(   
  \tfrac{1}{\Delta t}h^{2l+2}\|p^{n+1}-p^{n} \|_{L^2(\Omega)}^2
+\tfrac{1}{\Delta t}\|p^{n+1}-p^{n}-\Delta t p_t^{n}\|_{L^2(\Omega)}^2\\
&\quad\quad+\tfrac{1}{2}\Delta t   \| e_{\xi}^{h,n+1} \|_{L^2(\Omega)}^2\Big)\\
\le&
\tfrac\alpha\lambda\Big( h^{2l+2} 
    \int_{0}^{\tau}\| p_t  \|_{H^{l+1}(\Omega)}^2
   +  \Delta t^2   \int_{0}^{\tau}\| p_{tt}  \|_{L^2(\Omega)}^2
    + \tfrac{1}{2}\Delta t \sum_{n=0}^{N-1}        \| e_{\xi}^{h,n+1} \|_{L^2(\Omega)}^2
   \Big), 
\end{aligned}
\end{equation*} 
Similarly, \(E_4-E_{10}\) can be bounded as follows.
\begin{equation*} 
\begin{aligned}  
E_4\le&
\tfrac\beta\lambda\Big( h^{2l+2}
    \int_{0}^{\tau}\| T_t  \|_{H^{l+1}(\Omega)}^2
   +  \Delta t^2   \int_{0}^{\tau}\| T_{tt}  \|_{L^2(\Omega)}^2
    + \tfrac{1}{2}\Delta t  \sum_{n=0}^{N-1}        \| e_{\xi}^{h,n+1} \|_{L^2(\Omega)}^2 
   \Big), \\
\end{aligned}
\end{equation*} 
\begin{equation*} 
\begin{aligned}   
 E_5\le&
\tfrac\alpha\lambda\Big(h^{2k}\| \bm u_t  \|_{H^{k+1}(\Omega)}^2+
     h^{2s+2} \int_{0}^{\tau}\| \xi_t  \|_{H^{s+1}(\Omega)}^2
   +  \Delta t^2  \int_{0}^{\tau}\| \xi_{tt}  \|_{L^2(\Omega)}^2
    + \tfrac{1}{2}\Delta t \sum_{n=0}^{N-1} \| e_{p}^{h,n+1} \|_{L^2(\Omega)}^2
\Big),\\
\end{aligned}
\end{equation*} 
 \begin{equation*} 
\begin{aligned}  
E_6
 \le&
  c_\alpha\left(
       h^{2l+2}\int_{0}^{\tau}\| p_t  \|_{H^{l+1}(\Omega)}^2
   +   \Delta t^2   \int_{0}^{\tau}\| p_{tt}  \|_{L^2(\Omega)}^2
    + \tfrac{1}{2}\Delta t \sum_{n=0}^{N-1}        \| e_{p}^{h,n+1} \|_{L^2(\Omega)}^2
    \right),
\end{aligned}
\end{equation*} 
 \begin{equation*} 
\begin{aligned}   
 E_7
  \le&
  \left|c_{\alpha\beta} \right|\left(   h^{2l+2}
    \int_{0}^{\tau}\| T_t  \|_{H^{l+1}(\Omega)}^2
   +  \Delta t^2  \int_{0}^{\tau}\| T_{tt}  \|_{L^2(\Omega)}^2
    + \tfrac{1}{2}\Delta t \sum_{n=0}^{N-1}        \| e_{p}^{h,n+1} \|_{L^2(\Omega)}^2
    \right),\\
\end{aligned}
\end{equation*} 
\begin{equation*} 
\begin{aligned}   
E_8\le&
\tfrac\beta\lambda\Big(
       h^{2k} \int_{0}^{\tau}\| \bm u_t  \|_{H^{k+1}(\Omega)}^2
    + h^{2s+2} \int_{0}^{\tau}\| \xi_t  \|_{H^{s+1}(\Omega)}^2
   +\Delta t^2   \int_{0}^{\tau}\| \xi_{tt}  \|_{L^2(\Omega)}^2
    + \tfrac{1}{2}\Delta t \sum_{n=0}^{N-1} \| e_{T}^{h,n+1} \|_{L^2(\Omega)}^2
\Big),\\
\end{aligned}
\end{equation*} 
 \begin{equation*} 
\begin{aligned}   
E_9  \le&\left|c_{\alpha\beta} \right|\left(
    h^{2l+2} \int_{0}^{\tau}\| p_t  \|_{H^{l+1}(\Omega)}^2
   + \Delta t^2   \int_{0}^{\tau}\| p_{tt}  \|_{L^2(\Omega)}^2
    + \tfrac{1}{2}\Delta t  \sum_{n=0}^{N-1}   \| e_{T}^{h,n+1} \|_{L^2(\Omega)}^2
    \right),\\
\end{aligned}
\end{equation*} 
and
 \begin{equation*} 
\begin{aligned}   
E_{10} \le&c_\beta\left( 
      h^{2l+2} \int_{0}^{\tau}\| T_t  \|_{H^{l+1}(\Omega)}^2
   +   \Delta t^2   \int_{0}^{\tau}\| T_{tt}  \|_{L^2(\Omega)}^2
    + \tfrac{1}{2}\Delta t \sum_{n=0}^{N-1}        \| e_{T}^{h,n+1} \|_{L^2(\Omega)}^2
    \right).\\
\end{aligned}
\end{equation*} 
In addition to Assumption (A4), we further assume that$ \| \nabla T^{n}  \|_{L^\infty(\Omega)} <C_u$ and
$ \| \nabla p^{n}_h  \|_{L^\infty(\Omega)} <C_u$ where $C_u$ is independent of $h$ and $\Delta t$. By applying Young's inequality and the Cauchy-Schwarz inequality, we derive the following bound for $E_{11}$:
\begin{equation*} 
\begin{aligned}  
 E_{11} 
 = & \sum_{n=0}^{N-1}\Delta tc_f(\bm K \nabla p^{n+1}\cdot\nabla T^{n+1}- \bm K \nabla p^{n+1}_h\cdot\nabla T^{n+1}_h,e_T^{h,n+1})\\
=& c_f\Delta t \sum_{n=0}^{N-1} 
   (\bm K( \nabla p^{n+1}-\nabla p^{n+1}_h)\cdot\nabla T^{n+1}    
 + \bm K \nabla p^{n+1}_h\cdot(\nabla T^{n+1}-  \nabla T^{n+1}_h)        
           ,e_T^{h,n+1}  )\\
=& c_f\Delta t \sum_{n=0}^{N-1} 
   (\bm K \nabla e_p^{h,n+1}\cdot\nabla T^{n+1}    
 +  \nabla p^{n+1}_h\cdot\bm K\nabla e_T^{h,n+1}        
           ,e_T^{h,n+1}  )\\
\le& c_f\Delta t \sum_{n=0}^{N-1}\Big(  
      \|\bm K\nabla e_p^{h,n+1}\|_{L^2(\Omega)}^2   \|\nabla T^{n+1}\|_{L^\infty(\Omega)}^2     
   +     \| \nabla p^{n+1}_h\|_{L^\infty(\Omega)}^2  \|\bm K\nabla e_T^{h,n+1}\|_{L^2(\Omega)}^2     \\   
    &\quad \quad \quad \quad  + \tfrac{1}{2} \| e_T^{h,n+1} \|_{L^2(\Omega)}^2   \Big)\\
\le&  c_f\Delta t\Big( C_u^2 k_M\sum_{n=0}^{N-1} \| \nabla e_p^{h,n+1} \|_{L^2(\Omega)}^2 
             +   C_u^2k_M \sum_{n=0}^{N-1}  \| \nabla e_T^{h,n+1} \|_{L^2(\Omega)}^2   \\
   &\quad \quad      +\tfrac{1}{2}\sum_{n=0}^{N-1}  \|  e_T^{h,n+1} \|_{L^2(\Omega)}^2 
\Big).\\
\end{aligned}
\end{equation*} 
By integrating equations \eqref{eq: LHS=Ei} and \eqref{ineq: LHS ge etc.}  with the previously established bounds for $E_1$ to $E_{11}$ we derive the following:
 \begin{equation}  \label{eq: before Gronwall}
\begin{aligned}  
 & 2\mu \|\bm\varepsilon(e_{\bm u}^{h,N} )\|^2_{L^2(\Omega)}
        +\tfrac{1}{\lambda} \|\ e_{\xi}^{h,N}
                    -\alpha e_p^{h,N}-\beta e_T^{h,N}\|^2_{L^2(\Omega)}    
  +(c_0-b_0) \| e_p^{h,N} \|^2_{L^2(\Omega)} \\
  & +(a_0-b_0) \| e_T^{h,N} \|^2_{L^2(\Omega)}  
   +b_0  \| e_p^{h,N}-e_T^{h,N} \|^2_{L^2(\Omega)}  
             \\ 
  &  +\Delta t\sum_{n=0}^{N-1}\Big( 
          k_m\|\nabla e_p^{h,n+1}\|^2_{L^2(\Omega)} + \theta_m\|\nabla e_T^{h,n+1}\|^2_{L^2(\Omega)} 
       \Big) \\
\le&\mathbb C_1
\Big(
 h^{2s+2}  \int_{0}^{\tau}\| \xi_t  \|_{H^{k}(\Omega)}^2   
    +h^{2l+2} 
    \int_{0}^{\tau}\| p_t  \|_{H^{l+1}(\Omega)}^2
   + h^{2l+2} \int_{0}^{\tau}\| T_t  \|_{H^{l+1}(\Omega)}^2    \\
&+ \Delta t^2  \int_{0}^{\tau} \left\| \bm u_{tt} \right\|_{H^1(\Omega)}^2
             +\Delta t^2 \int_{0}^{\tau}\| \xi_{tt}  \|_{L^2(\Omega)}^2
             + \Delta t^2 \int_{0}^{\tau}\| p_{tt}  \|_{L^2(\Omega)}^2\\
        &     +\Delta t^2 \int_{0}^\tau \| T_{tt}  \|_{L^2(\Omega)}^2 \Big )
 \quad\quad\quad+ {\mathbb C_2}\Big(  \Delta t \sum_{n=0}^{N-1}\| e_{\xi}^{h,n+1} \|_{L^2(\Omega)}^2\\
& + \Delta t \sum_{n=0}^{N-1} \| e_{p}^{h,n+1} \|_{L^2(\Omega)}^2
 + \Delta t \sum_{n=0}^{N-1} \| e_{T}^{h,n+1} \|_{L^2(\Omega)}^2  
 + \Delta t \sum_{n=0}^{N-1} \| \nabla e_p^{h,n+1} \|_{L^2(\Omega)}^2 \\ 
&\quad\quad     
      +\Delta t\sum_{n=0}^{N-1}  \| \nabla e_T^{h,n+1} \|_{L^2(\Omega)}^2    \Big) ,\\ 
\le&\mathbb C_1
\Big(
 h^{2s+2}  \int_{0}^{\tau}\| \xi_t  \|_{H^{k}(\Omega)}^2   
    +h^{2l+2} 
    \int_{0}^{\tau}\| p_t  \|_{H^{l+1}(\Omega)}^2
   + h^{2l+2} \int_{0}^{\tau}\| T_t  \|_{H^{l+1}(\Omega)}^2    \\
&+ \Delta t^2  \int_{0}^{\tau} \left\| \bm u_{tt} \right\|_{H^1(\Omega)}^2
             +\Delta t^2 \int_{0}^{\tau}\| \xi_{tt}  \|_{L^2(\Omega)}^2
             + \Delta t^2 \int_{0}^{\tau}\| p_{tt}  \|_{L^2(\Omega)}^2\\
        &     +\Delta t^2 \int_{0}^\tau \| T_{tt}  \|_{L^2(\Omega)}^2
               \Big )
    \quad\quad     + \mathbb C_2 \Delta t \Big(  
    \sum_{n=0}^{N-1} 2\mu\| \bm\varepsilon(e_{\bm u}^{h,n+1}) \|_{L^2(\Omega)}^2   \\
&
+  \sum_{n=0}^{N-1} \frac1\lambda
                 \| e_{\xi}^{h,n+1}-\alpha e_p^{h,N}-\beta e_T^{h,N} \|_{L^2(\Omega)}^2  
                 +  \sum_{n=0}^{N-1}(c_0-b_0) \| e_{p}^{h,n+1} \|_{L^2(\Omega)}^2 \\
& 
  +   \sum_{n=0}^{N-1}(a_0-b_0) \| e_{T}^{h,n+1} \|_{L^2(\Omega)}^2  
  +   \sum_{n=0}^{N-1}b_0 \| e_{p}^{h,n+1}-e_{T}^{h,n+1} \|_{L^2(\Omega)}^2  
\Big ),
\end{aligned}
\end{equation}   
where $\mathbb C_1$ and $\mathbb C_2$ denotes a constant that is independent of $h$ and $\Delta t$.
By the integral form of the Grönwall inequality, \eqref{eq: before Gronwall} leads to  
 \begin{equation*}  
\begin{aligned}  
 & 2\mu \|\bm\varepsilon(e_{\bm u}^{h,N} )\|^2_{L^2(\Omega)}
                       +\tfrac{1}{\lambda} \|\ e_{\xi}^{h,N}-\alpha e_p^{h,N}-\beta 
                      e_T^{h,N}\|^2_{L^2(\Omega)}    
  +(c_0-b_0) \| e_p^{h,N} \|^2_{L^2(\Omega)} \\
  & +(a_0-b_0) \| e_T^{h,N} \|^2_{L^2(\Omega)}  
   +b_0  \| e_p^{h,N}-e_T^{h,N} \|^2_{L^2(\Omega)}  
             \\ 
  &  +\tfrac12\Delta t\sum_{n=0}^{N-1}\Big( 
          k_m \|\nabla e_p^{h,n+1} \|^2_{L^2(\Omega)} + \theta_m \|\nabla e_T^{h,n+1} \|^2_{L^2(\Omega)}
       \Big) \\
\le&\tilde C
\Big(
 h^{2s+2}  \int_{0}^{\tau}\| \xi_t  \|_{H^{k}(\Omega)}^2   
    +h^{2l+2} 
    \int_{0}^{\tau}\| p_t  \|_{H^{l+1}(\Omega)}^2
   + h^{2l+2} \int_{0}^{\tau}\| T_t  \|_{H^{l+1}(\Omega)}^2    \\
&+ \Delta t^2  \int_{0}^{\tau} \left\| \bm u_{tt} \right\|_{H^1(\Omega)}^2
             +\Delta t^2 \int_{0}^{\tau}\| \xi_{tt}  \|_{L^2(\Omega)}^2
             + \Delta t^2 \int_{0}^{\tau}\| p_{tt}  \|_{L^2(\Omega)}^2\\
        &     +\Delta t^2 \int_{0}^\tau \| T_{tt}  \|_{L^2(\Omega)}^2
             \Big ).
\\ 
\end{aligned}
\end{equation*}  
Taking $\bm v_h=e_{\bm u}^{h,n+1}$ and $\phi_h=e_{\xi}^{h,n+1}$ 
in \eqref{eq: evun+1-evxin+1=0}-\eqref{eq: -eun+1-exin+1+pn+1+Tn+1=0} and summing these two equations, we have the estimate 
$\|e_{\xi}^{h,N}\|_{L^2(\Omega)}\le\alpha\|e_p^N\|_{L^2(\Omega)}
                                       +\beta\|e_T^N \|_{L^2(\Omega)}$ .

Then, the desired result is obtained from the above estimate and an application of the triangle inequality on \eqref{eq: e=eI+eh}. Moreover, if $\lambda\ge1$, then the constant $\mathbb C_1$ and $\mathbb C_2$ remain bounded as $\lambda\to\infty$, because $\lambda$ enters the estimate only through $1/\lambda$. Hence \eqref{eq: non error estimate} is uniform with respect to the  Lamé parameter in the nearly incompressible limit.
\end{proof}

\begin{lemma}\label{lemma: poincare}
For the bounded domain $\Omega$, there exists a constant \(C_P > 0\) such that for every $v\in H_0^1(\Omega)$ the following Poincare inequality \cite{girault2012finite} is satisfied:
\[
       C_P\| v\|_{L^2(\Omega)} \leq \| \nabla v\|_{L^2(\Omega)}  .
\]
\end{lemma}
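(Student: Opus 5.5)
This is the standard Poincaré (Friedrichs) inequality on $H_0^1(\Omega)$. I will prove it by density, reducing to smooth compactly supported functions and then integrating along one coordinate direction. Since $\Omega$ is bounded, it lies in a slab $\{x : a < x_1 < b\}$ with $L := b-a<\infty$.

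First take $v\in C_c^\infty(\Omega)$ and extend it by zero to $\mathbb R^d$. For any $x=(x_1,x')$ the fundamental theorem of calculus gives
\[
v(x_1,x')=\int_a^{x_1}\partial_1 v(s,x')\,ds .
\]
Cauchy--Schwarz then yields
\[
|v(x_1,x')|^2\le (x_1-a)\int_a^b|\partial_1 v(s,x')|^2\,ds .
\]
Integrating first in $x_1\in(a,b)$ and then in $x'$ gives
\[
\|v\|_{L^2(\Omega)}^2\le \tfrac{L^2}{2}\,\|\partial_1 v\|_{L^2(\Omega)}^2\le \tfrac{L^2}{2}\,\|\nabla v\|_{L^2(\Omega)}^2 .
\]
This is the claimed inequality with $C_P=\sqrt2/L$.

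Next, for general $v\in H_0^1(\Omega)$, which by definition is the $H^1$-closure of $C_c^\infty(\Omega)$, choose $v_k\in C_c^\infty(\Omega)$ with $v_k\to v$ in $H^1(\Omega)$. Then both $\|v_k\|_{L^2}\to\|v\|_{L^2}$ and $\|\nabla v_k\|_{L^2}\to\|\nabla v\|_{L^2}$, so the inequality passes to the limit with the same constant.

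There is no real obstacle here. The only point needing care is that the zero extension keeps $v$ smooth on the whole slab, so the integration along each line starts from $v(a,x')=0$. This is exactly where the zero trace enters, and it is why the result fails without boundary conditions (constants being the counterexample). One could alternatively argue by contradiction via Rellich compactness, but the direct argument gives an explicit constant depending only on the width of $\Omega$. In the paper this lemma is used for $p,T\in W=H_0^1(\Omega)$, where homogeneous Dirichlet conditions are imposed on all of $\partial\Omega$, so the hypothesis holds.
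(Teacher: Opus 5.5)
Your proof is correct. The slab argument (fundamental theorem of calculus along $x_1$ plus Cauchy--Schwarz) gives $\|v\|_{L^2(\Omega)}^2\le \tfrac{L^2}{2}\|\partial_1 v\|_{L^2(\Omega)}^2$ for $v\in C_c^\infty(\Omega)$. Density of $C_c^\infty(\Omega)$ in $H_0^1(\Omega)$ then extends it with the same constant.

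The paper gives no proof of its own; it simply cites Girault--Raviart for this classical result, and your argument is the standard one. What your route adds is an explicit constant $C_P=\sqrt2/L$, where $L$ is the width of $\Omega$ in one coordinate direction. Since the Poincaré constant enters the time-step restriction of Theorem~\ref{th: convergence of all alg}, this makes that restriction computable in principle. The constant is not sharp: on the unit square it gives $\sqrt2$, whereas the optimal value is $\sqrt2\,\pi$, the square root of the first Dirichlet eigenvalue.
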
 
\begin{lemma}\label{lemma: inverse inequ}
There exists a constant \(C_{inv} > 0\) such that the following inverse inequality is satisfied:  
\[
\|\bm v\|_{H^1(\Omega)} \leq C_{inv} h^{-1} \|\bm v\|_{L^2(\Omega)}, \quad \forall \bm v \in \bm V_h,
\]
 as established in \cite{ciarlet2002finite}.
\end{lemma}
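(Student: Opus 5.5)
The plan is the classical scaling argument on the reference element combined with norm equivalence on finite-dimensional spaces. This works because $\bm V_h$ is a finite element space of piecewise polynomials of fixed degree, say $k$, on the shape-regular, quasi-uniform family $\mathcal T_h$. I would first prove the local estimate
\[
|\bm v|_{H^1(K)} \le C h_K^{-1}\|\bm v\|_{L^2(K)}, \qquad \forall K\in\mathcal T_h,
\]
for every $\bm v$ whose restriction to $K$ is a polynomial of degree at most $k$. Then I would sum over the elements.

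For the local step, fix the reference simplex $\hat K$ and the affine map $F_K(\hat x)=B_K\hat x+b_K$ with $F_K(\hat K)=K$. Set $\hat{\bm v}=\bm v\circ F_K$, which lies in the finite-dimensional space $[\mathbb P_k(\hat K)]^d$. On this space $|\cdot|_{H^1(\hat K)}$ is a seminorm and $\|\cdot\|_{L^2(\hat K)}$ is a norm, so there is a constant $\hat C$, depending only on $k$, $d$ and $\hat K$, with $|\hat{\bm v}|_{H^1(\hat K)}\le \hat C\|\hat{\bm v}\|_{L^2(\hat K)}$. The standard change-of-variables bounds give
\[
|\bm v|_{H^1(K)}\le \|B_K^{-1}\|\,|\det B_K|^{1/2}\,|\hat{\bm v}|_{H^1(\hat K)}, \qquad \|\hat{\bm v}\|_{L^2(\hat K)}=|\det B_K|^{-1/2}\|\bm v\|_{L^2(K)}.
\]
Shape regularity gives $\|B_K^{-1}\|\le C\rho_K^{-1}\le C h_K^{-1}$, where $\rho_K$ is the inradius. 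Combining these yields the local estimate with a constant independent of $K$ and $h$.

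To globalize, I would use quasi-uniformity, $h_K\ge c\,h$ for all $K$, and sum the squared local estimates. Because $\bm V_h\subset \bm V\subset [H^1(\Omega)]^d$ is conforming, the broken seminorm equals the global one, so this gives $|\bm v|_{H^1(\Omega)}\le C h^{-1}\|\bm v\|_{L^2(\Omega)}$. For the lower-order part of the full norm, $\|\bm v\|_{L^2(\Omega)}\le \mathrm{diam}(\Omega)\,h^{-1}\|\bm v\|_{L^2(\Omega)}$, since $h\le \mathrm{diam}(\Omega)$. Adding the two bounds gives the claim with $C_{inv}$ depending only on $k$, $d$, the shape-regularity and quasi-uniformity constants, and $\mathrm{diam}(\Omega)$. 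As the paper notes, this is the result in \cite{ciarlet2002finite}.

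No step is a genuine obstacle. The only point needing care is that the constant must be uniform in $h$. That uniformity comes entirely from the shape-regularity bound on $\|B_K^{-1}\|$ and from quasi-uniformity; without quasi-uniformity the argument yields only the local version with $h_K$ in place of $h$. The same argument applies verbatim to the scalar space $W_h$, which is the form in which the inequality will be used (together with Lemma \ref{lemma: poincare}) to control the nonlinear convection term in the convergence proof of Theorem \ref{th: convergence of all alg}.
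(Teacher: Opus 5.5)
Your scaling argument (reference-element norm equivalence, the affine change-of-variables bounds, shape regularity for $\|B_K^{-1}\|$, then quasi-uniformity to pass from $h_K$ to $h$) is correct, and it is essentially the paper's route: the paper gives no proof of its own and simply cites \cite{ciarlet2002finite}, where the inverse inequality is proved by exactly this argument. You are also right that the inequality is actually used for the scalar space $W_h$, to bound $\nabla\eta_p^{i-1}$ and $\nabla\eta_T^{i-1}$ in the convergence proof, and your argument covers that case unchanged.
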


To proceed with the analysis, in the remaining part of the article, we
let $\eta_{\bm u}^i,\eta_{\xi}^i,\eta_{p}^i,\eta_{T}^i$ be differences between the solutions at the iteration $i$ of iterative algorithms and
the solutions to problem \eqref{TP_Model_dis_a}-\eqref{TP_Model_dis_d}, i.e.
\begin{equation}\label{diff_iter_real}
\begin{aligned} 
     (\eta_{\bm u}^i,\eta_{\xi}^i,\eta_{p}^i,\eta_{T}^i):=
     (\bm u_h^{n},\xi_h^{n},p_h^{n},T_h^{n})
  -(\bm u_h^{n,i},\xi_h^{n,i},p_h^{n,i},T_h^{n,i}).
\end{aligned}
\end{equation}

Since the convergence proofs within each pair are similar, we only prove the most intractable case here: the MF-H algorithm. The remaining results follow analogously.
 
\begin{proof}[\textbf{The proof of the convergence of MF-H algorithm: }]

Assuming the \textbf{Assumptions} (A1)-(A4) hold.
Let $(\bm u_h^n,\xi_h^n,p_h^n,T_h^n)$ be the solution of the coupled problem \eqref{eq:coupled alg}. 
and $(\bm u_h^{n,i},\xi_h^{n,i},p_h^{n,i},T_h^{n,i})$ be the solution of the iteratively decoupled problem \eqref{alg: MF_H 1}-\eqref{alg: MF_H 2}.
We next prove the solution sequence $\{(\bm u_h^{n,i},\xi_h^{n,i},p_h^{n,i},T_h^{n,i})\}_{i=1}^{\infty}$ generated by the iteratively decoupled algorithm converges to the solution $(\bm u_h^n,\xi_h^n,p_h^n,T_h^n)$ of the coupled problem. 
Subtracting equation \eqref{alg: MF_H 1} -\eqref{alg: MF_H 2}  from equation \eqref{TP_Model_dis_a}-\eqref{TP_Model_dis_d} respectively, we obtain 
\begin{equation}\label{eq: var_N1c}
\begin{aligned} 
2\mu(\bm\varepsilon(\eta_{\bm u}^i),\bm\varepsilon(\bm v_h)) - (\nabla\cdot\bm v_h,\eta_{\xi}^i)=&0 ,   \\
\end{aligned}
\end{equation}
\begin{equation}\label{eq: var_N1d}
\begin{aligned} 
  (\nabla\cdot \eta_{\bm u}^{i},\phi_h)+\tfrac{1}{\lambda}(\eta_{\xi}^i,\phi_h) 
     -\tfrac{\alpha}{\lambda}(\eta_{p}^{i},\phi_h)+\rho_\xi(\eta_{\xi}^i-\eta_{\xi}^{i-1},  \phi_h)=& 
       \tfrac{\beta}{\lambda}(\eta_{T}^{i-1},\phi_h),\\ 
\end{aligned}
\end{equation} 
\begin{equation}\label{eq: var_N1a}
\begin{aligned} 
       -\tfrac\alpha\lambda(\eta_{\xi}^{i},q_h) 
       +c_\alpha(\eta_{p}^i,q_h)  
                &+ \Delta t(\bm K\nabla \eta_{p}^i,\nabla q_h)  
                 + \rho_p(\eta_p^{i}-\eta_p^{i-1},q_h)                      
             =     
                 - c_{\alpha\beta}(\eta_{T}^{i-1},q_h),\\
\end{aligned}
\end{equation}
\begin{equation}\label{eq: var_N1b}
\begin{aligned} 
      c_\beta(\eta_{T}^i,S_h) 
               +\Delta t(\bm\Theta\nabla \eta_{T}^i,\nabla S_h) 
         &+ \rho_T(\eta_T^{i}-\eta_T^{i-1},S_h)      \\ 
               = &\Delta t c_f(\mathcal L^{n,i-1},S_h) + \tfrac\beta\lambda(\eta_{\xi}^{i-1},S_h) 
            - c_{\alpha\beta}(\eta_{p}^{i-1},S_h)   ,\\
\end{aligned}
\end{equation}   
where $ \mathcal L^{n,i-1}  =  \bm K\nabla p^n_h\cdot\nabla T^n_h -\bm K\nabla p_h^{n,i-1}\cdot\nabla T_h^{n,i-1} $.
Taking $q_h=\eta_{p}^i$, $S_h=\eta_{T}^i$, $\bm v_h=\eta_{\bm u}^{i}$ and $\phi_h=\eta_{\xi}^i$ in equation \eqref{eq: var_N1c},  \eqref{eq: var_N1d}, \eqref{eq: var_N1a}  and \eqref{eq: var_N1b} and summing up the equations,  we have   
\begin{equation}\label{eq: sum H_F_M}  
\begin{aligned}   
  &        2\mu(\bm\varepsilon(\eta_{\bm u}^i),\bm\varepsilon(\eta_{\bm u}^{i}))  
                           +\tfrac{1}{\lambda}(\eta_{\xi}^i,\eta_{\xi}^i)     
                           -\tfrac{\alpha}{\lambda}(\eta_{p}^{i},\eta_{\xi}^i)
                            +\rho_\xi(\eta_{\xi}^i-\eta_{\xi}^{i-1},\eta_{\xi}^i)\\
  &          -  \tfrac\alpha\lambda( \eta_{\xi}^{i},\eta_{p}^i  )  
             + c_\alpha\| \eta_{p}^i\|_{L^2(\Omega)}^2 
                  + \Delta t(\bm K\nabla \eta_{p}^i,\nabla \eta_{p}^i)   
            + \rho_p(\eta_{p}^i-\eta_{p}^{i-1},\eta_{p}^i)            
            \\
   &    +   c_\beta\Vert \eta_{T}^i\|_{L^2(\Omega)}^2 
         + \Delta t(\bm\Theta\nabla \eta_{T}^i,\nabla \eta_{T}^i )
        + \rho_T(\eta_{T}^i-\eta_{T}^{i-1},\eta_{T}^i)   \\   
 = &     \tfrac{\beta}{\lambda}(\eta_{T}^{i-1},\eta_{\xi}^i) 
         -  c_{\alpha\beta}( \eta_{T}^{i-1}, \eta_{p}^i)  
       +\Delta t c_f(\mathcal L^{n,i-1},\eta_{T}^i)
        + \tfrac\beta\lambda( \eta_{\xi}^{i-1}, \eta_{T}^i)   
       - c_{\alpha\beta}( \eta_{p}^{i-1}, \eta_{T}^i) 
    .\\  
\end{aligned}
\end{equation} 
Since the sign of $c_{\alpha\beta}$ cannot be determined, we introduce the sign function sgn$(\cdot)$ to unify the two cases $c_{\alpha\beta}\ge0$ and $c_{\alpha\beta}<0$. Then, following equation \eqref{eq: sum H_F_M}, we obtain:
\begin{equation*}\label{eq: rearrange H_F_M}  
\begin{aligned}   
  &     c_\alpha\| \eta_{p}^i\|_{L^2(\Omega)}^2              
     +   c_\beta\Vert \eta_{T}^i\|_{L^2(\Omega)}^2 
       +2c_{\alpha\beta}(\eta_{p}^i, \eta_{T}^i)
       + \tfrac\alpha\lambda (\eta_{\xi}^i-\eta_{p}^i-\eta_{\xi}^{i-1}+\eta_{p}^{i-1},\eta_{\xi}^i-\eta_{p}^i) \\
     &            + \Delta t(\bm\Theta\nabla \eta_{T}^i,\nabla \eta_{T}^i )     
       + \Delta t(\bm K\nabla \eta_{p}^i,\nabla \eta_{p}^i)    
  + 2\mu(\bm\varepsilon(\eta_{\bm u}^i),\bm\varepsilon(\eta_{\bm u}^{i}))  
     +\tfrac{1}{\lambda}(\eta_{\xi}^i,\eta_{\xi}^i)    \\ 
   &     + (\rho_p - |c_{\alpha\beta}|-\tfrac\alpha\lambda)(\eta_{p}^i-\eta_{p}^{i-1},\eta_{p}^i) 
        + (\rho_T - |c_{\alpha\beta}|)(\eta_{T}^i-\eta_{T}^{i-1},\eta_{T}^i)  
        +(\rho_\xi-\tfrac\alpha\lambda)(\eta_{\xi}^i -\eta_{\xi}^{i-1} ,\eta_{\xi}^i )  \\   
&  + |c_{\alpha\beta}|( (\eta_{p}^i- \text{sgn}(c_{\alpha\beta})\eta_{T}^i)
-(\eta_{p}^{i-1}- \text{sgn}(c_{\alpha\beta})\eta_{T}^{i-1}) , (\eta_{p}^i-\text{sgn}(c_{\alpha\beta})\eta_{T}^i))    \\
=  &     c_\alpha\| \eta_{p}^i\|_{L^2(\Omega)}^2             
     +   c_\beta\Vert \eta_{T}^i\|_{L^2(\Omega)}^2 
       +2c_{\alpha\beta}(\eta_{p}^i, \eta_{T}^i) 
        +\rho_\xi(\eta_{\xi}^i-\eta_{\xi}^{i-1},\eta_{\xi}^i)\\
     &            + \Delta t(\bm\Theta\nabla \eta_{T}^i,\nabla \eta_{T}^i )     
       + \Delta t(\bm K\nabla \eta_{p}^i,\nabla \eta_{p}^i)    
  + 2\mu(\bm\varepsilon(\eta_{\bm u}^i),\bm\varepsilon(\eta_{\bm u}^{i}))  
     +\tfrac{1}{\lambda}(\eta_{\xi}^i,\eta_{\xi}^i)    \\ 
   &   + \rho_p(\eta_{p}^i-\eta_{p}^{i-1},\eta_{p}^i) 
        + \rho_T(\eta_{T}^i-\eta_{T}^{i-1},\eta_{T}^i)   \\   
&  + c_{\alpha\beta}( \eta_{p}^{i-1}, \eta_{T}^i)+ c_{\alpha\beta}( \eta_{T}^{i-1}, \eta_{p}^i)  
     -2c_{\alpha\beta}(\eta_{p}^i, \eta_{T}^i)  \\ 
 &   + \tfrac\alpha\lambda( \eta_{\xi}^{i-1},\eta_{p}^i )
    + \tfrac\alpha\lambda( \eta_p^{i-1},\eta_{\xi}^i )
     -2\tfrac\alpha\lambda( \eta_p^{i},\eta_{\xi}^i )\\
=&  \Delta t c_f(\mathcal L^{n,i-1},\eta_{T}^i)
    + \tfrac1\lambda( \eta_{\xi}^{i-1},\alpha\eta_{p}^i+\beta\eta_{T}^i  )
    + \tfrac{1}{\lambda}(\alpha\eta_{p}^{i-1}+\beta\eta_{T}^{i-1},\eta_{\xi}^i),\\ 
\end{aligned}
\end{equation*} 
yielding   
\begin{equation}\label{eq: i le i-1 H_F_M}  
\begin{aligned}    
& (c_0-b_0)\|\eta_{p}^i\|_{L^2(\Omega)}^2+(a_0-b_0)\|\eta_{T}^i\|_{L^2(\Omega)}^2
  +\tfrac1{2\lambda}\|\alpha \eta_{p}^i+\beta \eta_{T}^i \|_{L^2(\Omega)}^2\\
&+2\mu\|\bm\varepsilon(\eta_{\bm u}^i)\|_{L^2(\Omega)}^2
 + \tfrac1{2\lambda} \|\eta_{\xi}^i\|_{L^2(\Omega)}^2 
 + \Delta t k_m\|\nabla\eta_{p}^i\|_{L^2(\Omega)}^2    
 + \Delta t \theta_m\|\nabla\eta_{T}^i\|_{L^2(\Omega)}^2          \\ 
 &    + \tfrac12\Big(  \rho_p-\ |c_{\alpha\beta}|- \tfrac\alpha\lambda \Big)\| \eta_{p}^i\|_{L^2(\Omega)}^2        
 +   \tfrac12\Big(  \rho_T-|c_{\alpha\beta}|   \Big)\| \eta_{T}^i\|_{L^2(\Omega)}^2 
   + \tfrac12\Big(   \rho_\xi- \tfrac\alpha\lambda \Big)\| 
                                       \eta_{\xi}^{i}\|_{L^2(\Omega)}^2  \\  
   &  + \tfrac12|c_{\alpha\beta}| \| \eta_{p}^i- \text{sgn}(c_{\alpha\beta})\eta_{T}^i \|_{L^2(\Omega)}^2  
     + \tfrac12\tfrac\alpha\lambda
               \| \eta_{\xi}^{i} -  \eta_{p}^{i} \|_{L^2(\Omega)}^2\\  
\le  &   \Delta t c_f(\mathcal L^{n,i-1},\eta_{T}^i) 
     + \tfrac{1}{2\lambda}\| \eta_{\xi}^{i-1}\|_{L^2(\Omega)}^2 
     +   \tfrac{1}{2\lambda} \|\alpha\eta_{p}^{i-1} 
                                 +  \beta\eta_{T}^{i-1}\|_{L^2(\Omega)}^2 \\
& +\tfrac12 \Big(   \rho_p- |c_{\alpha\beta}| - \tfrac\alpha\lambda \Big)\| 
                                         \eta_{p}^{i-1}\|_{L^2(\Omega)}^2   
  +\tfrac12 \Big(    \rho_T- |c_{\alpha\beta}| \Big)\| 
                                       \eta_{T}^{i-1}\|_{L^2(\Omega)}^2
  +\tfrac12 \Big(    \rho_\xi- \tfrac\alpha\lambda \Big)\| 
                                       \eta_{\xi}^{i-1}\|_{L^2(\Omega)}^2 
   \\
& + \tfrac12|c_{\alpha\beta}| 
               \| \eta_{p}^{i-1} - \text{sgn}(c_{\alpha\beta})\eta_{T}^{i-1} \|_{L^2(\Omega)}^2
  + \tfrac12\tfrac\alpha\lambda
               \| \eta_{\xi}^{i-1} -  \eta_{p}^{i-1} \|_{L^2(\Omega)}^2.   \\ 
\end{aligned}
\end{equation} 

For the nonlinear term, we still assume that the $L^\infty$-norm of $\nabla p_h^{n,i}$, $\nabla p_h^{n}$, $\nabla T_h^{n,i}$ and, $\nabla T_h^n$ are bounded above on domain $\Omega$. By the Poincaré inequality and the embedding theorem, the $L^2$-norm of $\nabla p_h^{n,i}$, $\nabla p_h^{n}$, $\nabla T_h^{n,i}$ and, $\nabla T_h^n$ admit an upper bound. We uniformly denote all these upper bounds by $C_u$. By Lemma \ref{lemma: inverse inequ}, we obtain 
\begin{equation}\label{eq: estimate L}  
\begin{aligned}   
    (\mathcal L^{n,i-1},\eta_{T}^i)  
 = &(\bm K\nabla p^n_h\cdot\nabla T^n_h -\bm K\nabla p_h^{n,i-1}\cdot\nabla T_h^{n,i-1} ,\eta_{T}^i)\\
=&((\bm K\nabla p^n_h -\bm K\nabla p_h^{n,i-1} )\cdot\nabla T^n_h+
     \bm K\nabla p_h^{n,i-1}\cdot(\nabla T^{n}_h-\nabla T_h^{n,i-1} ),\eta_{T}^i)\\
\le&
\|\bm K\nabla p^n_h -\bm K\nabla p_h^{n,i-1}\|_{L^2(\Omega)}^2 \| \nabla T^{n}_h \|_{L^\infty(\Omega)}^2 \\
&+ \|\bm K\nabla p_h^{n,i-1}\|_{L^\infty(\Omega)}^2 \|\nabla T^{n}_h-\nabla T_h^{n,i-1}\|_{L^2(\Omega)}^2
+\tfrac12\|\eta_{T}^i\|_{L^2(\Omega)}^2\\
\le&
    k_M\| \nabla \eta_{p}^{i-1} \|_{L^2(\Omega)}^2 \| \nabla T^{n}_h \|_{L^\infty(\Omega)}^2 \\
&+ k_M\|\nabla p_h^{n,i-1}\|_{L^\infty(\Omega)}^2 \|\nabla\eta_{T}^{i-1}\|_{L^2(\Omega)}^2
+\tfrac12\|\eta_{T}^i\|_{L^2(\Omega)}^2\\
\le&
   k_MC_u^2C_{inv}^2h^{-2} \| \eta_{p}^{i-1} \|_{L^2(\Omega)}^2   
 + k_MC_u^2C_{inv}^2h^{-2}  \| \eta_{T}^{i-1}\|_{L^2(\Omega)}^2
+\tfrac12\|\eta_{T}^i\|_{L^2(\Omega)}^2.\\
\end{aligned}
\end{equation}
Combining equation \eqref{eq: i le i-1 H_F_M}  , \eqref{eq: estimate L}, and Lemma \ref{lemma: inverse inequ}, we obtain
\begin{equation}\label{eq: i le i-1}  
\begin{aligned}   & (c_0-b_0)\|\eta_{p}^i\|_{L^2(\Omega)}^2+(a_0-b_0)\|\eta_{T}^i\|_{L^2(\Omega)}^2
  +\tfrac1{2\lambda}\|\alpha \eta_{p}^i+\beta \eta_{T}^i \|_{L^2(\Omega)}^2\\
&+2\mu\|\bm\varepsilon(\eta_{\bm u}^i)\|_{L^2(\Omega)}^2
 + \tfrac1{2\lambda} \|\eta_{\xi}^i\|_{L^2(\Omega)}^2 
 + \Delta t k_mC_P^2\|\eta_{p}^i\|_{L^2(\Omega)}^2    
 + (\Delta t\theta_mC_P^2-\tfrac12\Delta t c_f)\|\eta_{T}^i\|_{L^2(\Omega)}^2          \\ 
 &   
  + \tfrac12\Big(   \rho_p-|c_{\alpha\beta}|  -\tfrac\alpha\lambda \Big)\| \eta_{p}^i\|_{L^2(\Omega)}^2        
  +   \tfrac12\Big(    \rho_T- |c_{\alpha\beta}| \Big)\| \eta_{T}^i\|_{L^2(\Omega)}^2 
  + \tfrac12\Big(   \rho_\xi- \tfrac\alpha\lambda \Big)\| \eta_{\xi}^{i}\|_{L^2(\Omega)}^2 \\  
   &  
    + \tfrac12|c_{\alpha\beta}| \| \eta_{p}^i- \text{sgn}(c_{\alpha\beta})\eta_{T}^i \|_{L^2(\Omega)}^2 
   + \tfrac12\tfrac\alpha\lambda
               \| \eta_{\xi}^{i} -  \eta_{p}^{i} \|_{L^2(\Omega)}^2 \\  
\le&  \Delta tc_fk_MC_u^2C_{inv}^2h^{-2}  \|\eta_{p}^{i-1}\|_{L^2(\Omega)}^2 
   +  \Delta tc_fk_MC_u^2C_{inv}^2h^{-2}  \|\eta_{T}^{i-1}\|_{L^2(\Omega)}^2
    \\
&     + \tfrac{1}{2\lambda}\| \eta_{\xi}^{i-1}\|_{L^2(\Omega)}^2 
     +   \tfrac{1}{2\lambda} \|\alpha\eta_{p}^{i-1} 
                                 +  \beta\eta_{T}^{i-1}\|_{L^2(\Omega)}^2 \\
& + \tfrac12\Big( \rho_p-|c_{\alpha\beta}| -\tfrac\alpha\lambda \Big)\| 
                                         \eta_{p}^{i-1}\|_{L^2(\Omega)}^2   
  +\tfrac12 \Big( \rho_T- |c_{\alpha\beta}| \Big)\| 
                                       \eta_{T}^{i-1}\|_{L^2(\Omega)}^2  
  + \tfrac12\Big(   \rho_\xi- \tfrac\alpha\lambda \Big)\| 
                                       \eta_{\xi}^{i-1}\|_{L^2(\Omega)}^2 
   \\
& + \tfrac12|c_{\alpha\beta}| 
               \| \eta_{p}^{i-1} - \text{sgn}(c_{\alpha\beta})\eta_{T}^{i-1}                                     \|_{L^2(\Omega)}^2
  + \tfrac12\tfrac\alpha\lambda
               \| \eta_{\xi}^{i-1} -  \eta_{p}^{i-1} \|_{L^2(\Omega)}^2. 
\end{aligned}
\end{equation}
Consequently, if $c_f=0$, it obviously‌ to show $\eta_{\bm u}^i$, $\eta_\xi^i$, $\eta_p^i$, $\eta_p^i$, $\eta_T^i\to 0$, otherwise, we can rewrite \eqref{eq: i le i-1} as 
\begin{equation*} 
  (1+\delta)\mathcal A^{i} + \mathcal B^i< \mathcal A^{i-1} + \mathcal B^{i-1}, 
\end{equation*} 
where 
\begin{equation*} 
  \mathcal A^{i} =
         \Delta t c_fk_MC_u^2C_{inv}^2h^{-2}    \| \eta_{p}^{i}\|_{L^2(\Omega)}^2   
      +  \Delta t c_fk_MC_u^2C_{inv}^2h^{-2}    \| \eta_{T}^{i}\|_{L^2(\Omega)}^2  , 
\end{equation*} 
\begin{equation*} 
\begin{aligned} 
   \mathcal B^i=&\tfrac{1}{2\lambda}\| \eta_{\xi}^{i}\|_{L^2(\Omega)}^2
   +   \tfrac{1}{2\lambda} \|\alpha\eta_{p}^{i} +  \beta\eta_{T}^{i}\|_{L^2(\Omega)}^2 
     + \tfrac12\Big(  \rho_p-|c_{\alpha\beta}| -\tfrac\alpha\lambda \Big)\| \eta_{p}^{i}\|_{L^2(\Omega)}^2   
  +    \tfrac12\Big(  \rho_T- |c_{\alpha\beta}|     \Big)\| \eta_{T}^{i}\|_{L^2(\Omega)}^2   \\
    &  + \tfrac12\Big(   \rho_\xi- \tfrac\alpha\lambda \Big)\| \eta_{\xi}^{i}\|_{L^2(\Omega)}^2
     + \tfrac12|c_{\alpha\beta}| \| \eta_{p}^{i}- \text{sgn}(c_{\alpha\beta})\eta_{T}^{i} \|_{L^2(\Omega)}^2 
     + \tfrac12\tfrac\alpha\lambda
               \| \eta_{\xi}^{i} -  \eta_{p}^{i} \|_{L^2(\Omega)}^2,   \\ 
\end{aligned}
\end{equation*} 
and 
\begin{equation*} 
\begin{aligned} 
\mathcal \delta =\min\Big\{ &
        \tfrac{( c_0-b_0 )+\Delta tk_mC_p^2- \Delta tc_fk_MC_u^2C_{inv}^2h^{-2}} 
        {\Delta t c_fk_MC_u^2C_{inv}^2h^{-2}},
     \tfrac{(a_0-b_0) - \tfrac12\Delta t c_f + \Delta t \theta_mC_p^2
        -  \Delta tc_fk_MC_u^2C_{inv}^2h^{-2}}
        {\Delta t c_fk_MC_u^2C_{inv}^2h^{-2}}
      \Big\}.
\end{aligned}
\end{equation*} 
If we take a small enough $\Delta t$
\begin{equation*} 
\begin{aligned} 
 \Delta t<\min\{
          \frac{( c_0-b_0 )h^2}{   c_fk_MC_u^2C_{inv}^2 - k_mC_p^2h^2 },
          ~~
          \frac{ (a_0-b_0 )h^2 }{ c_f  k_MC_u^2C_{inv}^2
                                  +\tfrac12 c_f h^2
                                - \theta_mC_p^2h^2}    \},
\end{aligned}                                   
\end{equation*} 
and large enough $\rho_p,\rho_T$
\begin{equation*} 
\begin{aligned} 
 \rho_p\ge |c_{\alpha\beta}| + \tfrac\alpha\lambda,~~~~\rho_T\ge |c_{\alpha\beta}|,~~~~\rho_\xi\ge\tfrac\alpha\lambda
\end{aligned}                                   
\end{equation*} 
such that $\delta,\mathcal B^i>0$, then, we have $\mathcal{A}^i \to 0$ as $i \to \infty$, which implies $\eta_p^i$, $\eta_T^i\to 0$ as $i \to \infty$. 
Furthermore,  by equation \eqref{eq: var_N1c}-\eqref{eq: var_N1d}
we conclude that  $\eta_{\xi}^i$, $\eta_{\bm u}^i \to 0$ as $i \to \infty$. 

Note that if the priori bound $C_u$ is available such that $k_mC_p^2- c_fk_MC_u^2C_{inv}^2h^{-2}\ge0$ and $ \theta_mC_p^2- \tfrac12 c_f  - c_fk_MC_u^2C_{inv}^2h^{-2}\ge0$ hold true, then $\delta>0$ holds and there is no constraint on the time step. Otherwise, the upper bound $C_u$ is supposed to be a sufficiently large constant so that 
$$ k_mC_p^2-  c_fk_MC_u^2C_{inv}^2h^{-2}<0 \quad\text{ and }\quad 
\theta_mC_p^2 - \tfrac12 c_f  - c_fk_MC_u^2C_{inv}^2h^{-2}<0,$$ 
thereby ensuring that the time step $\Delta t$ remains less than a positive value.  

\end{proof}

\section{Numerical experiments}\label{sec: experiments}
For the discrete model, we employ $\bm{\mathcal P}_k$ Lagrange finite elements for $\bm{u}$, ${\mathcal P}_{s}$ Lagrange finite elements for $\xi$, along with $\mathcal P_l$ Lagrange finite elements for both the fluid pressure $p$ and temperature $T$. The finite element spaces defined on mesh \(\mathcal{T}_h\) are as follows.
\[
\begin{aligned}
&\bm{V}_h = \left\{ \bm{v}_h \in [H_{0,\Gamma
_d}^1(\Omega)]^d \cap \bm{C}^0(\bar{\Omega}) : \bm{v}_h|_K \in \bm{\mathcal P}_k(K), \, \forall K \in \mathcal{T}_h \right\}, \\
&Q_h = \left\{ \phi_h \in L^2(\Omega) \cap C^0(\bar{\Omega}) : \phi_h|_K \in \mathcal{P}_{s}(K), \, \forall K \in \mathcal{T}_h \right\}, \\
&W_h = \left\{ q_h \in H_0^1(\Omega) \cap C^0(\bar{\Omega}) : q_h|_K \in \mathcal{P}_l(K), \, \forall K \in \mathcal{T}_h \right\}.
\end{aligned}
\]

This section presents a series of numerical experiments aimed at evaluating the computational accuracy and efficiency of the algorithms proposed in Section \ref{sec: coupled algorithm}. The primary objective is to assess the performance of the different parallel algorithms under various physical parameter settings.
All numerical experiments are conducted using the following benchmark problem, and all computations are performed using the FreeFEM++ software \cite{Hecht2012NewDI}.

In this section, we adopt the finite element spaces $\bm V_h, Q_h, W_h$ defined above with degrees $k=2,s=1,l=1$ satisfying discrete inf-sup condition over the domain $\Omega = [0,1] \times [0,1]$. Next, we consider the following example.

\textbf{Example 1:}
In this example, we appropriately select the body force $\bm{f}$, mass source $g$, and heat source $h$ to ensure that the exact solution is given by:
\begin{equation} \label{Ex: example1}
\begin{aligned}
\bm{u}(x,y,t) &= e^{-t} \begin{pmatrix}
\sin(2\pi y)(\cos(2\pi x) - 1) + \tfrac{1}{\mu + \lambda} \sin(\pi x) \sin(\pi y) \\
\sin(2\pi x)(1 - \cos(2\pi y)) + \tfrac{1}{\mu + \lambda} \sin(\pi x) \sin(\pi y)
\end{pmatrix}, \\
p(x,y,t) &= e^{-t} \sin(\pi x) \sin(\pi y), \\
T(x,y,t) &= e^{-t} \sin(\pi x) \sin(\pi y).
\end{aligned}
\end{equation}
We impose Dirichlet boundary conditions $\bm{u} = \bm{0}$ on the boundary segment $\Gamma_d = \{(x,y)\,|\, 0 \le y \le 1,\, x = 0 \text{, } 1\}$, and Neumann boundary conditions on $\Gamma_n = \{(x,y)\,|\, 0 \le x \le 1,\, y = 0 \text{, } 1\}$. For the pressure $p$ and temperature $T$, homogeneous Dirichlet boundary conditions are prescribed on the entire boundary $\partial\Omega$.  The initial conditions for $\bm{u}^0$, $p^0$, and $T^0$ at time $t = 0$ are given by the expressions in \eqref{Ex: example1}.

In the subsequent numerical experiments, we employ uniform triangular meshes with mesh sizes $h = \tfrac{1}{8}$, $h = \tfrac{1}{16}$, $h = \tfrac{1}{32}$, and $h = \tfrac{1}{64}$. These meshes are generated by iteratively bisecting each triangle through its midpoints, resulting in a refined mesh structure. 
At the terminal time $\tau$, we compute and report the $L^2$-norm errors for the variable $\xi$, as well as the $H^1$-norm errors for the displacement field $\bm{u}$, pressure $p$, and temperature $T$. Additionally, we provide the corresponding convergence rates established in Theorem \ref{th: error estimate} for these errors. 
Unless otherwise stated, the terminal time for the experiments is set to $\tau = 0.001$. The time step size is set to $\Delta t = 10^{-4}$, which requires $10$ time steps to reach the terminal time $\tau$. 
The number of iterations in each time step is also set to $10$.
The numerical results presented in Tables \ref{tab: K Theta 0.1 nu 0.3}-\ref{tab: strong coupling} evaluate the performance and accuracy of the proposed algorithms under various physical parameter settings. 
Table \ref{tab: K Theta 0.1 nu 0.3} displays the results for a compressible thermo-poroelastic material, using the baseline parameters \(E = 1\), \(c_f=1\), \(c_0 = a_0 = 0.2\), \(b_0 = 0.1\), \(\alpha = \beta = 0.1\), \(\bm{K} = \bm{\Theta} = 0.1 \bm{I}\), and a Poisson ratio of \(\nu=0.3\).
The stabilization parameters are set to $\rho_p=\rho_T=\rho_\xi=1$ in this case.

\subsection{Large Lamé parameters}\label{subsec: Large Lame parameters}

To investigate the nearly incompressible regime, we set the Poisson ratio to $\nu = 0.4999999$ while keeping all other parameters at their baseline values and $\rho_p=\rho_T=\rho_\xi=1$. As $\nu$ approaches the theoretical limit of $0.5$ (where $\lambda \to \infty$), the thermo-poroelastic model becomes effectively incompressible. Table \ref{tab: nu 0.49999} shows that the energy-norm errors for all algorithms retain optimal convergence rates under this limiting case, thereby confirming their robustness with respect to $\nu$.

\begin{table}[htbp] \tiny  
\caption{Example 1: The errors and convergence orders with decreasing mesh sizes of
algorithms. $\nu=0.3$. }
\centering
\begin{tabular*}{0.9\textwidth}{@{\extracolsep{\fill}} l ccccccccc @{\extracolsep{\fill}}}
\toprule
  $h$&$\Vert\bm u\Vert_{H^1}$ &rate &$\Vert\xi\Vert_{L^2}$ &rate &$\Vert p\Vert_{H^1}$ &rate&$\Vert T\Vert_{H^1}$ &rate \\ 
\midrule
 MFH &\\
\midrule
$1/8$ & 3.97847e-01 & 0.00 & 2.59030e-02 & 0.00 & 7.32951e-01 & 0.00 & 7.37281e-01 & 0.00 \\
$1/16$ & 1.01965e-01 & 1.96 & 6.26926e-03 & 2.05 & 2.70959e-01 & 1.44 & 2.72271e-01 & 1.44 \\
$1/32$ & 2.56523e-02 & 1.99 & 1.55735e-03 & 2.01 & 1.16401e-01 & 1.22 & 1.16619e-01 & 1.22 \\
$1/64$ & 6.42322e-03 & 2.00 & 3.88807e-04 & 2.00 & 5.54321e-02 & 1.07 & 5.54611e-02 & 1.07 \\
\midrule
Parallel M-FH &\\
\midrule
$1/8$ & 3.97847e-01 & 0.00 & 2.59030e-02 & 0.00 & 7.32954e-01 & 0.00 & 7.37284e-01 & 0.00 \\
$1/16$ & 1.01965e-01 & 1.96 & 6.26926e-03 & 2.05 & 2.70959e-01 & 1.44 & 2.72271e-01 & 1.44 \\
$1/32$ & 2.56523e-02 & 1.99 & 1.55734e-03 & 2.01 & 1.16401e-01 & 1.22 & 1.16618e-01 & 1.22 \\
$1/64$ & 6.42322e-03 & 2.00 & 3.88796e-04 & 2.00 & 5.54317e-02 & 1.07 & 5.54607e-02 & 1.07 \\
\midrule
 Parallel MF-H&\\
\midrule
$1/8$ & 3.97847e-01 & 0.00 & 2.59003e-02 & 0.00 & 7.32330e-01 & 0.00 & 7.37024e-01 & 0.00 \\
$1/16$ & 1.01965e-01 & 1.96 & 6.26849e-03 & 2.05 & 2.70842e-01 & 1.44 & 2.72223e-01 & 1.44 \\
$1/32$ & 2.56523e-02 & 1.99 & 1.55716e-03 & 2.01 & 1.16387e-01 & 1.22 & 1.16611e-01 & 1.22 \\
$1/64$ & 6.42322e-03 & 2.00 & 3.88772e-04 & 2.00 & 5.54321e-02 & 1.07 & 5.54595e-02 & 1.07 \\
\midrule
Parallel MH-F&\\
\midrule
$1/8$ & 3.97847e-01 & 0.00 & 2.59003e-02 & 0.00 & 7.32703e-01 & 0.00 & 7.36652e-01 & 0.00 \\
$1/16$ & 1.01965e-01 & 1.96 & 6.26849e-03 & 2.05 & 2.70913e-01 & 1.44 & 2.72152e-01 & 1.44 \\
$1/32$ & 2.56523e-02 & 1.99 & 1.55716e-03 & 2.01 & 1.16394e-01 & 1.22 & 1.16605e-01 & 1.22 \\
$1/64$ & 6.42322e-03 & 2.00 & 3.88772e-04 & 2.00 & 5.54305e-02 & 1.07 & 5.54611e-02 & 1.07 \\
\midrule
Parallel M-F-H&\\
\midrule
$1/8$ & 3.97846e-01 & 0.00 & 2.58917e-02 & 0.00 & 7.31122e-01 & 0.00 & 7.35429e-01 & 0.00 \\
$1/16$ & 1.01965e-01 & 1.96 & 6.26606e-03 & 2.05 & 2.70616e-01 & 1.43 & 2.71920e-01 & 1.44 \\
$1/32$ & 2.56523e-02 & 1.99 & 1.55662e-03 & 2.01 & 1.16359e-01 & 1.22 & 1.16576e-01 & 1.22 \\
$1/64$ & 6.42322e-03 & 2.00 & 3.88727e-04 & 2.00 & 5.54315e-02 & 1.07 & 5.54605e-02 & 1.07 \\
\bottomrule
\label{tab: K Theta 0.1 nu 0.3}
\end{tabular*}
\end{table} 

 \begin{table}\tiny  
\caption{Example 1: The errors and convergence orders with decreasing mesh sizes of algorithms. $ \nu=0.4999999$. }
\centering
\begin{tabular*}{0.9\textwidth}{@{\extracolsep{\fill}} l ccccccccc @{\extracolsep{\fill}}}
\toprule
  $h$&$\Vert\bm u\Vert_{H^1}$ &rate &$\Vert\xi\Vert_{L^2}$ &rate &$\Vert p\Vert_{H^1}$ &rate&$\Vert T\Vert_{H^1}$ &rate \\ 
\midrule
 MFH &\\
\midrule
$1/8$ & 3.95271e-01 & 0.00 & 4.13935e-02 & 0.00 & 4.32622e-01 & 0.00 & 4.32630e-01 & 0.00 \\
$1/16$ & 1.01202e-01 & 1.97 & 9.72317e-03 & 2.09 & 2.17480e-01 & 0.99 & 2.17481e-01 & 0.99 \\
$1/32$ & 2.54560e-02 & 1.99 & 2.40345e-03 & 2.02 & 1.08887e-01 & 1.00 & 1.08887e-01 & 1.00 \\
$1/64$ & 6.37382e-03 & 2.00 & 5.99499e-04 & 2.00 & 5.44618e-02 & 1.00 & 5.44618e-02 & 1.00 \\
\midrule
Parallel M-FH &\\
\midrule
$1/8$ & 3.95271e-01 & 0.00 & 4.13935e-02 & 0.00 & 4.32622e-01 & 0.00 & 4.32630e-01 & 0.00 \\
$1/16$ & 1.01202e-01 & 1.97 & 9.72317e-03 & 2.09 & 2.17480e-01 & 0.99 & 2.17481e-01 & 0.99 \\
$1/32$ & 2.54560e-02 & 1.99 & 2.40345e-03 & 2.02 & 1.08887e-01 & 1.00 & 1.08887e-01 & 1.00 \\
$1/64$ & 6.37382e-03 & 2.00 & 5.99499e-04 & 2.00 & 5.44618e-02 & 1.00 & 5.44618e-02 & 1.00 \\
\midrule
 Parallel MF-H&\\
\midrule
$1/8$ & 3.95271e-01 & 0.00 & 4.13935e-02 & 0.00 & 4.32623e-01 & 0.00 & 4.32630e-01 & 0.00 \\
$1/16$ & 1.01202e-01 & 1.97 & 9.72317e-03 & 2.09 & 2.17480e-01 & 0.99 & 2.17481e-01 & 0.99 \\
$1/32$ & 2.54560e-02 & 1.99 & 2.40345e-03 & 2.02 & 1.08887e-01 & 1.00 & 1.08887e-01 & 1.00 \\
$1/64$ & 6.37382e-03 & 2.00 & 5.99494e-04 & 2.00 & 5.44618e-02 & 1.00 & 5.44618e-02 & 1.00 \\
\midrule
Parallel MH-F&\\
\midrule
$1/8$ & 3.95271e-01 & 0.00 & 4.13935e-02 & 0.00 & 4.32623e-01 & 0.00 & 4.32630e-01 & 0.00 \\
$1/16$ & 1.01202e-01 & 1.97 & 9.72317e-03 & 2.09 & 2.17480e-01 & 0.99 & 2.17481e-01 & 0.99 \\
$1/32$ & 2.54560e-02 & 1.99 & 2.40345e-03 & 2.02 & 1.08887e-01 & 1.00 & 1.08887e-01 & 1.00 \\
$1/64$ & 6.37382e-03 & 2.00 & 5.99494e-04 & 2.00 & 5.44618e-02 & 1.00 & 5.44618e-02 & 1.00 \\
\midrule
Parallel M-F-H&\\
\midrule
$1/8$ & 3.95271e-01 & 0.00 & 4.13935e-02 & 0.00 & 4.32623e-01 & 0.00 & 4.32630e-01 & 0.00 \\
$1/16$ & 1.01202e-01 & 1.97 & 9.72317e-03 & 2.09 & 2.17480e-01 & 0.99 & 2.17481e-01 & 0.99 \\
$1/32$ & 2.54560e-02 & 1.99 & 2.40345e-03 & 2.02 & 1.08887e-01 & 1.00 & 1.08887e-01 & 1.00 \\
$1/64$ & 6.37382e-03 & 2.00 & 5.99494e-04 & 2.00 & 5.44618e-02 & 1.00 & 5.44618e-02 & 1.00 \\
\bottomrule
\label{tab: nu 0.49999}
\end{tabular*}
\end{table}

\subsection{Pressure and temperature numerical instability}\label{subsec: Pressure and temperature oscillations}

In this subsection, we investigate the numerical instability of pressure and temperature under the limiting condition where $a_0$, $b_0$, and $c_0$ are sufficiently small. For the sake of simplicity, we further set $a_0 = b_0 = c_0 = 0$, and assign the permeability-like tensor $\bm{K}$ and thermal conductivity tensor $\bm{\Theta}$ the small values $\bm{K} = \bm{\Theta} = 10^{-14}\bm{I}$.
We only perform a single time step integration, i.e., $\Delta t = \tau = 0.001$.
Figure \ref{fig:six-images} assesses the accuracy of the proposed algorithms for the three-field model in this limiting configuration, while all remaining parameters are kept identical to the baseline setup. The stabilization parameters are set to $\rho_p =\rho_T =\rho_\xi=\tfrac{1}{\lambda}$.
Since temperature shares the identical real-valued solution structure with pressure, only the pressure field results are presented here to avoid redundant content; the temperature counterpart exhibits exactly the same behavior.
As can be observed, the raw three-field model without stabilization produces spurious numerical instability in the pressure field, whereas the presented stabilization algorithms effectively eliminate these unphysical artifacts.

\begin{figure}[htbp]
  \centering
  \begin{subfigure}[b]{0.3\textwidth}
    \centering
    \includegraphics[width=\linewidth]{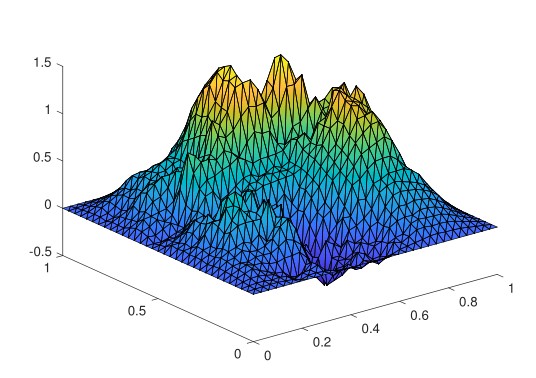}
    \caption{Three field model}
    \label{fig:img1}
  \end{subfigure}\hfill
  \begin{subfigure}[b]{0.3\textwidth}
    \centering
    \includegraphics[width=\linewidth]{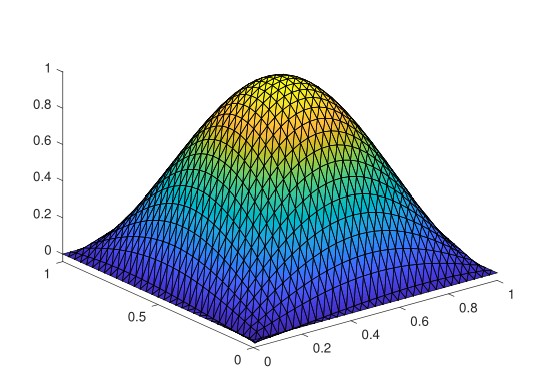}
    \caption{MFH}
    \label{fig:img2}
  \end{subfigure}\hfill
  \begin{subfigure}[b]{0.3\textwidth}
    \centering
    \includegraphics[width=\linewidth]{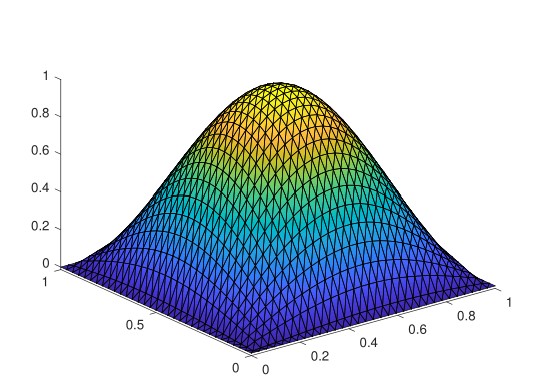}
    \caption{M-FH}
    \label{fig:img3}
  \end{subfigure}
  \vspace{1em} %
  \begin{subfigure}[b]{0.3\textwidth}
    \centering
    \includegraphics[width=\linewidth]{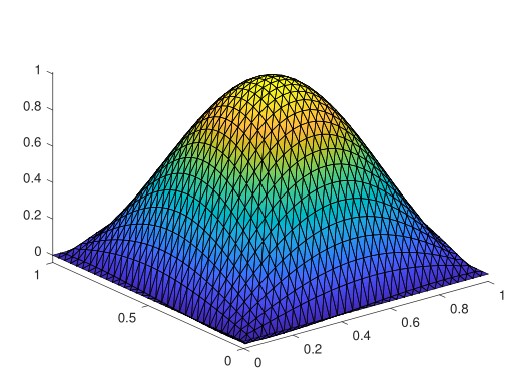}
    \caption{MF-H}
    \label{fig:img4}
  \end{subfigure}\hfill
  \begin{subfigure}[b]{0.3\textwidth}
    \centering
    \includegraphics[width=\linewidth]{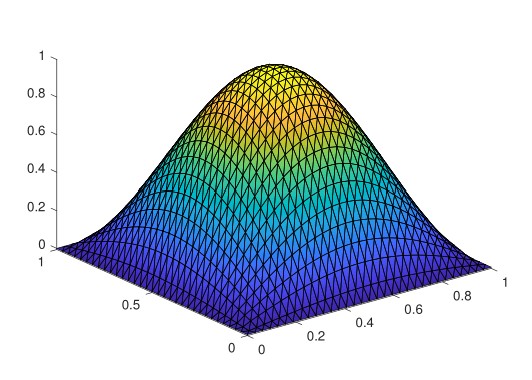}
    \caption{MH-F}
    \label{fig:img5}
  \end{subfigure}\hfill
  \begin{subfigure}[b]{0.3\textwidth}
    \centering
    \includegraphics[width=\linewidth]{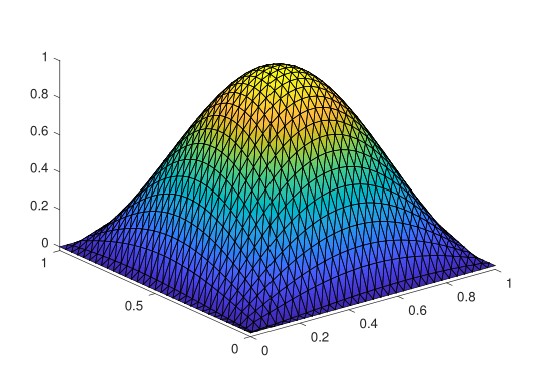}
    \caption{M-F-H}
    \label{fig:img6}
  \end{subfigure}
  \caption{Pressure of six algorithms}
  \label{fig:six-images}
\end{figure}

\subsection{Convection dominate and strong coupling effects}\label{subsec: Strong nonlinear and coupling effects}
We also examine two challenging regimes: a convection-dominated case ($c_f=100$) and a strong coupling case ($a_0=c_0=1$, $b_0=0$, $\alpha=\beta=2$). The results are given in Table \ref{tab: strong nonlinear} and Table \ref{tab: strong coupling}, respectively. In each case, all iterative algorithms maintain the expected convergence rates, and the decoupled methods produce errors comparable to the coupled algorithm. We apply stabilization parameters $\rho_p = \rho_T = \rho_\xi=10$ to all algorithms.

\begin{table}[htbp]  \tiny  
\caption{Example 1: The errors and convergence orders with decreasing mesh sizes of
 algorithms. $c_f=100$.  }
\centering
\begin{tabular*}{0.9\textwidth}{@{\extracolsep{\fill}} l ccccccccc @{\extracolsep{\fill}}}
\toprule
  $h$&$\Vert\bm u\Vert_{H^1}$ &rate &$\Vert\xi\Vert_{L^2}$ &rate &$\Vert p\Vert_{H^1}$ &rate&$\Vert T\Vert_{H^1}$ &rate \\ 
\midrule
 MFH &\\
\midrule
$1/8$ & 3.97715e-01 & 0.00 & 2.44757e-02 & 0.00 & 4.38722e-01 & 0.00 & 4.43847e-01 & 0.00 \\
$1/16$ & 1.01927e-01 & 1.96 & 5.85009e-03 & 2.06 & 2.18782e-01 & 1.00 & 2.21021e-01 & 1.01 \\
$1/32$ & 2.56426e-02 & 1.99 & 1.44500e-03 & 2.02 & 1.09155e-01 & 1.00 & 1.10055e-01 & 1.01 \\
$1/64$ & 6.42079e-03 & 2.00 & 3.63288e-04 & 1.99 & 5.45455e-02 & 1.00 & 5.47404e-02 & 1.01 \\
\midrule
Parallel M-FH &\\
\midrule
$1/8$ & 3.97715e-01 & 0.00 & 2.44722e-02 & 0.00 & 4.37664e-01 & 0.00 & 4.42086e-01 & 0.00 \\
$1/16$ & 1.01927e-01 & 1.96 & 5.84910e-03 & 2.06 & 2.18579e-01 & 1.00 & 2.20466e-01 & 1.00 \\
$1/32$ & 2.56425e-02 & 1.99 & 1.44482e-03 & 2.02 & 1.09132e-01 & 1.00 & 1.09915e-01 & 1.00 \\
$1/64$ & 6.42080e-03 & 2.00 & 3.63344e-04 & 1.99 & 5.45505e-02 & 1.00 & 5.47572e-02 & 1.01 \\
\midrule
 Parallel MF-H&\\
\midrule
$1/8$ & 3.97715e-01 & 0.00 & 2.44657e-02 & 0.00 & 4.35984e-01 & 0.00 & 4.38306e-01 & 0.00 \\
$1/16$ & 1.01927e-01 & 1.96 & 5.84759e-03 & 2.06 & 2.18251e-01 & 1.00 & 2.19302e-01 & 1.00 \\
$1/32$ & 2.56426e-02 & 1.99 & 1.44506e-03 & 2.02 & 1.09093e-01 & 1.00 & 1.09621e-01 & 1.00 \\
$1/64$ & 6.42090e-03 & 2.00 & 3.64566e-04 & 1.99 & 5.45619e-02 & 1.00 & 5.48077e-02 & 1.00 \\
\midrule
Parallel MH-F&\\
\midrule
$1/8$ & 3.97715e-01 & 0.00 & 2.44658e-02 & 0.00 & 4.35293e-01 & 0.00 & 4.39160e-01 & 0.00 \\
$1/16$ & 1.01927e-01 & 1.96 & 5.84760e-03 & 2.06 & 2.18115e-01 & 1.00 & 2.19498e-01 & 1.00 \\
$1/32$ & 2.56426e-02 & 1.99 & 1.44506e-03 & 2.02 & 1.09076e-01 & 1.00 & 1.09649e-01 & 1.00 \\
$1/64$ & 6.42090e-03 & 2.00 & 3.64573e-04 & 1.99 & 5.45677e-02 & 1.00 & 5.47948e-02 & 1.00 \\
\midrule
Parallel M-F-H&\\
\midrule
$1/8$ & 3.97715e-01 & 0.00 & 2.44719e-02 & 0.00 & 4.37572e-01 & 0.00 & 4.41961e-01 & 0.00 \\
$1/16$ & 1.01927e-01 & 1.96 & 5.84901e-03 & 2.06 & 2.18557e-01 & 1.00 & 2.20425e-01 & 1.00 \\
$1/32$ & 2.56425e-02 & 1.99 & 1.44480e-03 & 2.02 & 1.09128e-01 & 1.00 & 1.09902e-01 & 1.00 \\
$1/64$ & 6.42079e-03 & 2.00 & 3.63344e-04 & 1.99 & 5.45499e-02 & 1.00 & 5.47543e-02 & 1.01 \\
\bottomrule
\label{tab: strong nonlinear}
\end{tabular*}
\end{table}

\begin{table}[htbp]  \tiny  
\caption{Example 1: The errors and convergence orders with decreasing mesh sizes of
 algorithms. $a_0=c_0=1$, $b_0=0$, $\alpha=\beta=2$.  }
\centering
\begin{tabular*}{0.9\textwidth}{@{\extracolsep{\fill}} l ccccccccc @{\extracolsep{\fill}}}
\toprule
  $h$&$\Vert\bm u\Vert_{H^1}$ &rate &$\Vert\xi\Vert_{L^2}$ &rate &$\Vert p\Vert_{H^1}$ &rate&$\Vert T\Vert_{H^1}$ &rate \\ 
\midrule
 MFH &\\
\midrule
$1/8$ & 4.07461e-01 & 0.00 & 8.44351e-02 & 0.00 & 5.10198e-01 & 0.00 & 5.10488e-01 & 0.00 \\
$1/16$ & 1.04736e-01 & 1.96 & 2.27568e-02 & 1.89 & 2.36045e-01 & 1.11 & 2.36189e-01 & 1.11 \\
$1/32$ & 2.63910e-02 & 1.99 & 5.83932e-03 & 1.96 & 1.13145e-01 & 1.06 & 1.13207e-01 & 1.06 \\
$1/64$ & 6.61273e-03 & 2.00 & 1.41517e-03 & 2.04 & 5.52967e-02 & 1.03 & 5.53132e-02 & 1.03 \\
\midrule
Parallel M-FH &\\
\midrule
$1/8$ & 4.05615e-01 & 0.00 & 8.04397e-02 & 0.00 & 4.95547e-01 & 0.00 & 4.95784e-01 & 0.00 \\
$1/16$ & 1.04163e-01 & 1.96 & 2.11017e-02 & 1.93 & 2.32158e-01 & 1.09 & 2.32273e-01 & 1.09 \\
$1/32$ & 2.62394e-02 & 1.99 & 5.19991e-03 & 2.02 & 1.12197e-01 & 1.05 & 1.12246e-01 & 1.05 \\
$1/64$ & 6.58217e-03 & 2.00 & 1.35645e-03 & 1.94 & 5.52115e-02 & 1.02 & 5.52257e-02 & 1.02 \\
\midrule
 Parallel MF-H&\\
\midrule
$1/8$ & 4.07087e-01 & 0.00 & 8.37686e-02 & 0.00 & 5.54473e-01 & 0.00 & 4.69875e-01 & 0.00 \\
$1/16$ & 1.04618e-01 & 1.96 & 2.24065e-02 & 1.90 & 2.46674e-01 & 1.17 & 2.26488e-01 & 1.05 \\
$1/32$ & 2.63619e-02 & 1.99 & 5.67514e-03 & 1.98 & 1.15114e-01 & 1.10 & 1.11266e-01 & 1.03 \\
$1/64$ & 6.60998e-03 & 2.00 & 1.38778e-03 & 2.03 & 5.54131e-02 & 1.05 & 5.52036e-02 & 1.01 \\
\midrule
Parallel MH-F&\\
\midrule
$1/8$ & 4.07087e-01 & 0.00 & 8.37686e-02 & 0.00 & 4.69736e-01 & 0.00 & 5.54700e-01 & 0.00 \\
$1/16$ & 1.04618e-01 & 1.96 & 2.24065e-02 & 1.90 & 2.26418e-01 & 1.05 & 2.46790e-01 & 1.17 \\
$1/32$ & 2.63619e-02 & 1.99 & 5.67514e-03 & 1.98 & 1.11232e-01 & 1.03 & 1.15165e-01 & 1.10 \\
$1/64$ & 6.60998e-03 & 2.00 & 1.38778e-03 & 2.03 & 5.51907e-02 & 1.01 & 5.54260e-02 & 1.06 \\
\midrule
Parallel M-F-H&\\
\midrule
$1/8$ & 4.07463e-01 & 0.00 & 8.38327e-02 & 0.00 & 5.10340e-01 & 0.00 & 5.10533e-01 & 0.00 \\
$1/16$ & 1.04707e-01 & 1.96 & 2.23605e-02 & 1.91 & 2.35810e-01 & 1.11 & 2.35906e-01 & 1.11 \\
$1/32$ & 2.63473e-02 & 1.99 & 5.50948e-03 & 2.02 & 1.12837e-01 & 1.06 & 1.12879e-01 & 1.06 \\
$1/64$ & 6.56650e-03 & 2.00 & 1.31040e-03 & 2.07 & 5.51509e-02 & 1.03 & 5.51619e-02 & 1.03 \\
\bottomrule
\label{tab: strong coupling}
\end{tabular*}
\end{table}

\subsection{Computational efficiency}\label{sec: com eff}

As noted earlier, a key advantage of the proposed methods is their parallelizability. This subsection evaluates the computational performance of the proposed algorithms. The terminal time is set to $\tau = 0.001$, and the time step is set to $\Delta t= 0.001$ as well, so only one time step is needed to reach the terminal time. The number of iterations is set to $10$. The physical parameters are maintained identical to those adopted in the baseline. 
In the experiments, the M‌FH algorithm is executed on a single CPU core by a sequential program. For the remaining parallel algorithms, subproblems are explicitly dispatched to two or three independent CPU cores, and they are executed concurrently.
 All these numerical experiments were conducted on a workstation equipped with ‌an Intel(R) Core(TM) i5-8250U CPU running Ubuntu 24.04 LTS.
Table \ref{tab: time compare 1} reports the CPU times for all algorithms. MFH takes the longest CPU time, while parallel M-F-H and parallel M-FH take the shortest.

\begin{table}[htbp] \tiny  
\caption{Comparison of CPU time. $\tau=\Delta t=0.001$. }
\centering
\begin{tabular}{cccccccc}
\toprule
  &  $\Vert\bm u\Vert_{H^1}$  &$\Vert\xi\Vert_{L^2}$  &$\Vert p\Vert_{H^1}$ &$\Vert T\Vert_{H^1}$ & CPU time(s)\\ 
\midrule
MFH            &6.42327e-03&3.89307e-04&5.56057e-02&5.56479e-02&69.86\\    
parallel M-FH  &6.42327e-03&3.89313e-04&5.56056e-02&5.56478e-02&19.97\\
parallel MF-H  &6.42327e-03&3.89268e-04&5.56055e-02&5.56468e-02&30.15\\
parallel MH-F  &6.42327e-03&3.89268e-04&5.56046e-02&5.56476e-02&27.15\\ 
parallel M-F-H &6.42326e-03&3.89111e-04&5.56041e-02&5.56462e-02&18.40\\
\bottomrule
\label{tab: time compare 1}
\end{tabular}
\end{table}

\subsection*{Acknowledgments} 

\subsection*{Funding}
Project 12601707 supported by National Natural Science Foundation of China.


\bibliographystyle{plain}
\bibliography{ref}



\end{document}